\documentclass[runningheads, orivec, envcountsame]{llncs}
\usepackage{amsmath,amssymb}
\usepackage{array}
\usepackage{mathrsfs}
\usepackage{stmaryrd}
\usepackage{bm}
\usepackage{enumitem}
\usepackage{graphics}
\usepackage{booktabs}
\usepackage{hhline}
\usepackage{multirow}
\usepackage{color}
\spnewtheorem*{theorem*}{Theorem}{\bf}{\itshape}
\spnewtheorem*{proof*}{}{\itshape}{\upshape}
\spnewtheorem*{corollary*}{Corollary}{\bf}{\itshape}
\newcolumntype{C}[1]{>{\hfil}m{#1}<{\hfil}}
\setlength{\extrarowheight}{2pt}
\newcommand\lam[2]{\lambda#1.\,#2}
\DeclareMathOperator{\FV}{FV}
\DeclareMathOperator{\dom}{dom}
\newcommand{\To}{\Rightarrow}
\newcommand{\bzero}{\mathbf{0}}
\newcommand{\bone}{\mathbf{1}}
\newcommand{\Fml}{\mathop{\mathrm{FOFml}}}
\newcommand{\FOFml}{\mathop{\mathrm{FOFml}}}
\newcommand{\ba}{\mathbf{a}}
\newcommand{\bb}{\mathbf{b}}

\newcommand{\ttfunc}[1]{\mathop{\mathrm{t}_{#1}}}
\newcommand{\arity}{\mathop{\mathrm{ar}}}
\newcommand{\ILS}{\mathop{\mathrm{ILS}}}

\newcommand{\CLS}{\mathop{\mathrm{CLS}}}
\newcommand{\sK}{\mathscr{K}}
\newcommand{\sC}{\mathscr{C}}

\newcommand{\FOCLS}{\mathop{\mathrm{FOCLS}}}
\newcommand{\FOCDS}{\mathop{\mathrm{FOCDS}}}
\newcommand{\FOILS}{\mathop{\mathrm{FOILS}}}

\newcommand{\FOSqt}{\mathop{\mathrm{FOSqt}}}

\newcommand{\emptyfunc}{\varnothing}

\newcommand{\CD}{\mathrm{CD}}
\newcommand{\exor}{\oplus}
\newcommand{\barred}{\mathrel{\reflectbox{$\vdash$}}}
\newcommand{\sP}{\mathscr{P}}
\newcommand{\Last}{\mathop{\mathrm{Last}}}
\newcommand{\D}{\mathbf{D}}

\setlist[description]{font=\scshape\mdseries}
\begin{document}
\title{What Kinds of Connectives Cause \\ the Difference between \\ Intuitionistic Predicate Logic \\ and the Logic of Constant Domains?}
\titlerunning{What Kinds of Connectives Cause the Difference?}
\author{Naosuke Matsuda\inst{1}
\and
Kento Takagi\inst{2}\orcidID{0000-0003-3810-9610}}
\authorrunning{N. Matsuda and K. Takagi}
\institute{Department of Engineering, Niigata Institute of Technology, \\
Fujihashi, Kashiwazaki City, Niigata 945-1195, Japan \\
\email{matsuda.naosuke@gmail.com}\\
\and
Department of Computer Science, Tokyo Institute of Technology, \\ Ookayama, Meguro-ku, Tokyo 152-8522, Japan \\
\email{kento.takagi.aa@gmail.com}}
\maketitle         
\begin{abstract}
It is known that intuitionistic Kripke semantics can be generalized so that it can treat arbitrary propositional connectives characterized by truth functions. We extend this generalized Kripke semantics to first-order logic, and study how the choice of connectives changes the relation between intuitionistic predicate logic and the logic of constant domains in terms of validity of sequents. Our main result gives a simple necessary and sufficient condition for the set of valid sequents in intuitionistic predicate logic to coincide with the set of valid sequents in the logic of constant domains.

\keywords{Kripke semantics  \and Propositional connective \and Intuitionistic predicate logic \and The logic of constant domains.}
\end{abstract}
\section{Introduction}\label{section: Introduction}
\subsection{Background---generalized Kripke semantics}\label{subsection: Background}
\label{subsection: Background}
In \cite{kripke1965semantical}, Kripke provided the intuitionistic interpretation for formulas built out of the usual propositional connectives $\lnot$, $\to$, $\land$ and $\lor$. The notion of validity in intuitionistic logic can be defined with this interpretation. Rousseau~\cite{rousseau1970sequents} and Geuvers and Hurkens~\cite{geuvers2017deriving} extended the intuitionistic interpretation so that it can treat arbitrary propositional connectives characterized by truth functions. Their idea is very simple: when $c$ is a propositional connective and $\ttfunc{c}$ is the truth function associated with $c$, then the value $\| c(\alpha_1, \ldots, \alpha_n) \|_w$ of formula $c(\alpha_1, \ldots, \alpha_n)$ at world $w$ is defined as follows:
  \[
  \| c(\alpha_1, \ldots, \alpha_n )\|_w = 1 \ \text{ if and only if } \ \text{$\ttfunc{c}(\| \alpha_1 \|_v, \ldots, \| \alpha_n \|_v) = 1$ for all $v \succeq w$}.
  \]   
  
It is well-known that the relation between intuitionistic logic and  classical logic changes depending on what kinds of propositional connectives are used. In this paper, we consider an important relation between classical logic and intuitionistic logic---the inclusion relationship between the set of classically valid sequents and the set of intuitionistically valid sequents. For a given set $\sC$ of propositional connectives,  let $\CLS(\sC)$ denote the set of classically valid sequents built out of connetives in $\sC$ and $\ILS(\sC)$ the set of Kripke-valid sequents built out of connectives in $\sC$. Then, $\ILS(\sC) \subseteq \CLS(\sC)$ always holds because classical models can be regarded as one-world Kripke models. However, the converse inclusion depends on $\sC$. For example, $\ILS(\{\lnot\}) \subsetneq \CLS(\{ \lnot \})$ and $\ILS(\{\land, \lor \}) = \CLS(\{ \land, \lor \})$ hold.\footnote{For example, $\lnot \lnot p \To p \in \CLS(\{ \lnot \}) \setminus \ILS(\{ \lnot \})$.}
Then, there arises a natural question: for what $\sC$, does $\ILS(\sC) = \CLS(\sC)$ hold? We answered this question in \cite{kawano2021effect}:
  \begin{theorem*}\label{thm: relation between ILS and CLS}
  $\ILS(\sC) = \CLS(\sC)$ if and only if all $c \in \sC$ are monotonic.
  \end{theorem*}
\subsection{Predicate logic with general propositional connectives}\label{subsection: Predicate logic with truth functional connectives}
Generalized Kripke semantics introduced in \S~\ref{subsection: Background}, as well as classical semantics, can straightforwardly be extended to first-order logic by adding $\forall$ and $\exists$ with the usual interpretations.
Furthermore, in the case of predicate logic, there is an important intermediate logic between classical logic and intuitionistic logic, called the \emph{logic of constant domains}, or $\CD$. $\CD$ is characterized by Kripke models with constant domains.

This paper analyzes how the choice of connectives changes the relation between classical first-order logic, intuitionistic first-order logic and the logic of constant domains in terms of validity of sequents. Let $\FOCLS(\sC)$ be the set of classically valid \emph{predicate} sequents built out of connectives in $\sC$, $\FOILS(\sC)$ the set of Kripke-valid \emph{predicate} sequents built out of connectives in $\sC$ and $\FOCDS(\sC)$ the set of $\CD$-valid \emph{predicate} sequents built out of connectives in $\sC$.\footnote{$\CD$-valid sequents means those sequents that are valid in all constant domain Kripke models.}
Then, by the definitions of each kinds of models, $\FOILS(\sC) \subseteq \FOCDS(\sC) \subseteq \FOCLS(\sC)$ always holds. However, the converse inclusion relationships depend on $\sC$, and hence we have two questions:

  \begin{enumerate}[label=(\roman*)]
  \item For what $\sC$, does $\FOILS(\sC) = \FOCDS(\sC)$ hold?
  \label{enum: i}
  \item For what $\sC$, does $\FOCDS(\sC) = \FOCLS(\sC)$ hold?
  \label{enum: ii}
  \end{enumerate}

For \ref{enum: i}, we obtain the following theorem by simply extending the theorem in \S~\ref{subsection: Background}.

  \begin{theorem*}
  $\FOCDS(\sC) = \FOCLS(\sC)$ if and only if all connectives in $\sC$ are monotonic.
  \end{theorem*}
The proof is easily obtained by extending that of the theorem in \S~\ref{subsection: Background}. The reader who wants to know a detailed proof is referred to  \cite{matsuda2021effect}.
  
  In this paper, we consider the remaining question, \ref{enum: ii}.
 Before considering the general case, we remark that it is known that in the case of the usual connectives, the presence of disjunction causes the difference between $\FOILS(\sC)$ and $\FOCDS(\sC)$ (cf., e.g., \cite{gabbay1981semantical}). That is, for $\sC \subseteq \{ \lnot, \land, \lor, \to \}$, $\FOILS(\sC) = \FOCDS(\sC)$ if and only if $\lor \notin \sC$. However, this result does not tell us what \emph{property} of disjunction causes the difference. By considering general connectives, our main result clarifies what property of connectives cause the difference between $\FOILS(\sC)$ and $\FOCDS(\sC)$:
   \begin{theorem*}
  $\FOILS(\sC) = \FOCDS(\sC)$ if and only if all $c \in \sC$ are \emph{supermultiplicative}.\footnote{For the definition of supermultiplicativity, see \S\S~\ref{subsection: truth functions}-\ref{subsection: Propositional connectives and formulas}.}
  \end{theorem*}
Furthermore, combining the preceding two theorems, we obtain a necessary and sufficient condition for $\FOILS(\sC)$ to coincide with $\FOCLS(\sC)$:
  \begin{corollary*}
  $\FOILS(\sC) = \FOCLS(\sC)$ if and only if all connectives $c \in \sC$ are both monotonic and supermultiplicative.
  \end{corollary*}
  \begin{remark}
  Although we shall not describe it explicitly, we can easily see from the proofs that these theorems and corollary also hold if we use sequents with a single succedent instead of those with multi-succedents. For example, the proof of the second theorem also shows that $\FOILS_1(\sC) = \FOCDS_1(\sC)$ if and only if all connectives in $\sC$ are supermultiplicative, where $\FOILS_1(\sC) = \{ \Gamma \To \Delta \in \FOILS(\sC) \mid \text{$\Delta$ consists of a single formula} \}$ and $\FOCDS_1(\sC) = \{ \Gamma \To \Delta \in \FOCDS(\sC) \mid \text{$\Delta$ consists of a single formula} \}$.
  \end{remark}  
\section{Preliminaries}\label{section: Preliminaries}
\subsection{Truth functions}
\label{subsection: truth functions}
We introduce some notions concerning truth functions briefly. For more detailed presentations, see \S~1.1 in \cite{kawano2021effect}.

For a sequence $\ba \in \{0,1\}^n$ of truth values of length $n$, we denote by $\ba[i]$ the $i$-th component of $\ba$. We denote $\langle 1, \ldots, 1 \rangle$ and $\langle 0, \ldots, 0 \rangle$ by $\bone$ and $\bzero$, respectively. (The lengths of $\bone$ and $\bzero$ as sequences depend on the context.)
An $n$-ary \emph{truth function} is a function from $\{ 0, 1\}^n$ to $\{ 0, 1\}$.

We define a partial order $\sqsubseteq$ on $\{ 0, 1\}^n$ by $\ba \sqsubseteq \bb \iff \text{$\ba[i] \leq \bb[i]$ for all $1 \leq i \leq n$}$.
For $\ba, \bb \in \{0,1\}^n$, $\ba \sqcap \bb$ denotes the infimum of $\{ \ba, \bb \}$, so that $(\ba \sqcap \bb)[i] = \min (\ba[i], \bb[i])$ for all $1 \leq i \leq n$.
An $n$-ary truth function $f$ is said to be \emph{supermultiplicative} if for all $\ba, \bb \in \{ 0,1\}^{\arity(c)}$, $f(\ba) = f(\bb) = 1$ implies $f(\ba \sqcap \bb) = 1$.\footnote{Note that $f(\ba) = f(\bb) = 1 \Rightarrow f(\ba \sqcap \bb) = 1$ if and only if $f(\ba) \sqcap f(\bb) \leq f(\ba \sqcap \bb)$.}
\subsection{Formulas and sequents}
\label{subsection: Propositional connectives and formulas}
A \emph{(propositional) connective} is a symbol with a truth function. For a propositional connective $c$, we denote by $\ttfunc{c}$ the truth function associated with $c$ and by $\arity(c)$ the arity of $\ttfunc{c}$.
A connective is said to be \emph{supermultiplicative} if its truth function is supermultiplicative. Note that it can easily be verified that for a connective $c$ with $\arity(c) \leq 2$, $c$ is supermultiplicative if and only if $c$ is neither disjunction $\lor$ nor exclusive disjunction $\exor$.\footnote{The truth function of exclusive disjunction is defined as follows: $\ttfunc{\exor}(x,y)=1$ if and only if $x \neq y$.}

Fix a set $\sC$ of propositional connectives.
An \emph{atomic formula} is an expression of the form $p(x_1, \ldots, x_n)$, where $p$ is an $n$-ary predicate symbol and $x_1, \ldots, x_n$ are individual variables.\footnote{Although we assume the language has no function symbols and no constant symbols, all arguments in this paper work as well in case the language has function symbols and constant symbols.}
The set $\Fml(\sC)$ of \emph{(predicate) formulas} with propositional connectives in $\sC$ is defined inductively as follows:
  \begin{itemize}[itemsep=1pt]
  \item if $\alpha$ is an atomic formula, then $\alpha \in \Fml(\sC)$;
  \item if $c \in \sC$ and $\alpha_1, \ldots, \alpha_{\arity(c)} \in \Fml(\sC)$, then $c(\alpha_1, \ldots, \alpha_{\arity(c)}) \in \Fml(\sC)$;
  \item if $\alpha \in \Fml(\sC)$ and $x$ is an individual variable, then $\forall x \alpha \in \Fml(\sC)$ and $\exists x \alpha \in \Fml(\sC)$.
  \end{itemize}
We denote by $\FV(\alpha)$ the set of free variables of $\alpha$. In particular, $\FV(c(\alpha_1, \ldots, \alpha_{\arity(c)}))$ is defined to be $\FV(\alpha_1) \cup \cdots \cup \FV(\alpha_{\arity(c)})$.

A \emph{(predicate) sequent} is an expression of the form $\Gamma \To \Delta$, where $\Gamma$ and $\Delta$ are sets of formulas. We denote by $\FOSqt(\sC)$ the set $\{ \Gamma \To \Delta \mid \Gamma, \Delta \subseteq \FOFml(\sC) \}$. If $\Gamma = \{ \alpha_1, \ldots, \alpha_n \}$ and $\Delta = \{ \beta_1, \ldots, \beta_m\}$, we often omit the braces and simply write $\alpha_1, \ldots, \alpha_n \To \beta_1, \ldots, \beta_m$ for $\Gamma \To \Delta$.

\subsection{Kripke Semantics}
\emph{Kripke models} are defined the same as in the case of first-order intuitionistic logic with the usual propositional connectives. That is,
a Kripke model is a tuple $\langle W, \preceq, D, I \rangle$, where $\langle W, \preceq \rangle$ is a pre-ordered set; $D$ assigns to each $w \in W$ a non-empty set $D(w)$, called the \emph{individual domain at $w$}; and $I$ is a function, called the \emph{interpretation function}, that assigns to each pair $\langle w, p \rangle$ of a world and an $n$-ary predicate symbol a function $I(w,p)$ from $D(w)^n$ to $\{0,1 \}$. As usual, it is required that $D$ be monotonic and $I$ satisfy the hereditary condition. 
That is, we assume that for any $w, v \in W$, if $w \preceq v$, then $D(w) \subseteq D(v)$ and $I(w, p) (a_1, \ldots, a_n ) \leq I(v,p)(a_1, \ldots, a_n)$ for any $n$-ary predicate symbol and any $a_1, \ldots, a_n \in D(w)$.
An \emph{assignment} in $D(w)$ is a function which assigns to each individual variable an element in $D(w)$. As usual, a Kripke model $\sK = \langle W, \preceq, D, I \rangle$ is said to be \emph{constant domain} if $D(w) = D(v)$ for all $w, v \in W$.

Let $\sK = \langle W, \preceq, D, I \rangle$ be a Kripke model. The \emph{value} $\| \alpha \|_{\sK, w}^\rho \in \{0,1\}$ of a formula $\alpha \in \FOFml(\sC)$ at a world $w \in W$ with respect to an assignment $\rho$ in $D(w)$ is defined as follows:
  \begin{itemize}[itemsep=1pt]
  \item $\| p(x_1, \ldots, x_n) \|_{\sK, w}^\rho = I(w,p)(\rho(x_1), \ldots, \rho(x_n))$;
  \item $\| c(\alpha_1, \ldots, \alpha_{\arity(c)}) \|_{\sK, w}^\rho = 1$ if and only if $\ttfunc{c}(\| \alpha_1 \|_{\sK, v}^{\rho}, \ldots, \| \alpha_{\arity(c)} \|_{\sK, v}^{\rho}) = 1$ for all $v \succeq w$;
  \item $\| \forall x \alpha \|_{\sK, w}^\rho = 1$ if and only if $\| \alpha \|_{\sK, v}^{\rho[x \mapsto a]} = 1$ for all $v \succeq w$ and all $a \in D(v)$;
  \item $\| \exists x \alpha \|_{\sK, w}^\rho = 1$ if and only if $\| \alpha \|_{\sK, w}^{\rho[x \mapsto a]} = 1$ for some $a \in D(w)$,
  \end{itemize}
where $\rho[x \mapsto a]$ is the assignment in $D(w)$ which maps $x$ to $a$ and is equal to $\rho$ everywhere else. As in the case of the usual connectives, the hereditary condition easily extends to any formula:
  \begin{lemma}
  For any $\alpha \in \FOFml(\sC)$, any Kripke model $\sK = \langle W, \preceq, D, I \rangle$, any $w, v \in W$ and any assignment $\rho$ in $D(w)$, if $w \preceq v$ then $\| \alpha \|_{\sK, w}^\rho \leq \| \alpha \|_{\sK, w}^\rho$.
  \end{lemma}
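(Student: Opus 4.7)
The plan is to prove the lemma (which, modulo an apparent typo, should read $\| \alpha \|_{\sK, w}^\rho \leq \| \alpha \|_{\sK, v}^\rho$) by straightforward induction on the construction of $\alpha \in \FOFml(\sC)$. The key observation is that the generalized Kripke clauses for connectives and for $\forall$ are set up by \emph{quantifying over all future worlds}, which makes monotonicity essentially automatic for those cases; the only clause that actually uses the inductive hypothesis is the one for $\exists$.

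In the base case, $\alpha = p(x_1,\ldots,x_n)$, the conclusion is just the hereditary condition imposed on $I$, read off from the definition of the value on atomic formulas together with $w \preceq v$. For the inductive step I would handle three cases. For $\alpha = c(\alpha_1,\ldots,\alpha_{\arity(c)})$, suppose $\| \alpha \|_{\sK,w}^\rho = 1$, i.e.\ $\ttfunc{c}(\| \alpha_1 \|_{\sK,u}^\rho,\ldots,\| \alpha_{\arity(c)} \|_{\sK,u}^\rho) = 1$ for all $u \succeq w$; since every $u' \succeq v$ satisfies $u' \succeq w$ by transitivity of $\preceq$, the same holds at $v$, so $\| \alpha \|_{\sK,v}^\rho = 1$. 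The case $\alpha = \forall x \beta$ is analogous: the clause already quantifies over all $u \succeq w$ and all $a \in D(u)$, so narrowing the quantification to $u \succeq v$ preserves truth.

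The only case requiring the induction hypothesis is $\alpha = \exists x \beta$. If $\| \exists x \beta \|_{\sK,w}^\rho = 1$, pick a witness $a \in D(w)$ with $\| \beta \|_{\sK,w}^{\rho[x \mapsto a]} = 1$; since $D$ is monotonic we have $a \in D(v)$, so $\rho[x \mapsto a]$ is a legitimate assignment in $D(v)$, and by the induction hypothesis applied to $\beta$ we obtain $\| \beta \|_{\sK,v}^{\rho[x \mapsto a]} = 1$, hence $\| \exists x \beta \|_{\sK,v}^\rho = 1$.

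There is no real obstacle here; the mild bookkeeping point to watch is simply that assignments defined using $D(w)$ remain valid assignments in $D(v)$, which is guaranteed by the monotonicity of $D$. In particular, supermultiplicativity of connectives plays no role in this lemma — it will only matter later, when reasoning about the interaction between $\exists$ and connective clauses in constant-domain models.
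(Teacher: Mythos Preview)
Your proof is correct and is exactly the standard induction one would expect; the paper itself does not spell out a proof at all, merely remarking that ``the hereditary condition easily extends to any formula'' and then using the lemma without further comment. Your observation that only the $\exists$-case genuinely needs the induction hypothesis (the clauses for $c(\vec{\alpha})$ and $\forall$ being monotone by design, via the universal quantification over future worlds) is accurate and worth making explicit.
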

We shall use this lemma without references.

For a Kripke model $\sK = \langle W, \preceq, D, I \rangle$, a possible world $w \in W$, an assignment $\rho$ in $D(w)$ and a sequent $\Gamma \To \Delta \in \FOSqt(\sC)$, the \emph{value} $\| \Gamma \To \Delta \|_{\sK, w}^{\rho} \in \{0,1\}$ of $\Gamma \To \Delta$ at $w$ with respect to $\rho$ is defined by
  \[
  \| \Gamma \To \Delta \|_{\sK, w}^\rho =
  \begin{cases}
  0 & \text{if $\| \alpha \|_{\sK, w}^\rho = 1$ for all $\alpha \in \Gamma$ and $\| \beta \|_{\sK, w}^\rho = 0$ for all $\beta \in \Delta$} \\
1 & \text{otherwise}.
  \end{cases}
  \]
For a Kripke model $\sK = \langle W, \preceq, D, I \rangle$, a sequent $\Gamma \To \Delta \in \FOSqt(\sC)$ is \emph{valid} in $\sK$ (notation: $\sK \vDash \Gamma \To \Delta$) if $\| \Gamma \To \Delta \|_{\sK,w}^\rho = 1$ for all $w \in W$ and all assignment $\rho$ in $D(w)$. A sequent $\Gamma \To \Delta \in \FOSqt(\sC)$ is \emph{Kripke-valid} (resp.~\emph{$\CD$-valid}) if it is valid in all Kripke models (resp.~constant domain Kripke models). We denote by $\FOILS(\sC)$ the set $\{ \Gamma \To \Delta \in \FOSqt(\sC) \mid \text{$\Gamma \To \Delta$ is Kripke-valid} \}$ and by $\FOCDS(\sC)$ the set $\{ \Gamma \To \Delta \in \FOSqt(\sC) \mid \text{$\Gamma \To \Delta$ is $\CD$-valid} \}$. Then, immediately it follows that $\FOILS(\sC) \subseteq \FOCDS(\sC)$ for any $\sC$.

Here, for later use, we prepare some notations.
The value $\| \alpha \|_{\sK, w}^\rho$ depends only on the values of $\rho$ on $\FV(\alpha)$. Hence, for a partial function $\rho$ from the set of individual variables to $D(w)$, $\| \alpha \|_{\sK, w}^\rho$ can be defined if $\rho(x)$ is defined for all $x \in \FV(\alpha)$. Even for such (partial) assignments, we define $\rho[x \mapsto a]$ to be the function which maps $x$ and is equal to $\rho$ on $\dom(\rho) \setminus \{ x \}$. We use $\varnothing$ to denote the empty assignment $\emptyset \to D(w)$. For example, for a Kripke model $\sK = \langle W, \preceq, D, I \rangle$, $w \in W$ and $a, b \in D(w)$, we have $\| p(x, y) \|_{\sK, w}^{\varnothing [x \mapsto a] [y \mapsto b]} = I(w, p) (a,b)$.

If $\vec{\alpha}$ denotes a sequence of formulas $\alpha_1, \ldots, \alpha_n$, then we denote by $\| \vec{\alpha} \|_{\sK, w}^{\rho}$ the sequence of values of $\alpha_1, \ldots, \alpha_n$, $\langle \| \alpha_1 \|_{\sK, w}^\rho, \ldots, \| \alpha_n \|_{\sK, w}^{\rho} \rangle$. For example, if $\vec{\beta} = \beta_1, \ldots, \beta_n$, then $\ttfunc{c}(\| \vec{\beta} \|_{\sK, w}^\rho) = \ttfunc{c}(\| \beta_1 \|_{\sK, w}^\rho, \ldots, \| \beta_n \|_{\sK, w}^\rho)$.
\section{Condition for $\FOILS(\sC) = \FOCLS(\sC)$} \label{section: Main section}
In this section, we show the following main theorem:
  \begin{theorem}\label{theorem: main theorem}
  $\FOILS(\sC) = \FOCDS(\sC)$ if and only if all connectives in $\sC$ are supermultiplicative.
  \end{theorem}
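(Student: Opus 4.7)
The plan is to prove each direction of the theorem separately.

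For sufficiency, I would assume every $c \in \sC$ is supermultiplicative. Since $\FOILS(\sC) \subseteq \FOCDS(\sC)$ is automatic, I would argue by contrapositive that any sequent failing in some Kripke model already fails in a constant-domain Kripke model. Starting from a Kripke counter-model $\sK = \langle W, \preceq, D, I\rangle$ with counter-world $w_0$, I would construct a constant-domain model $\sK^*$ on the upset $W_0 := \{v \in W : w_0 \preceq v\}$, with common domain $D^* := \bigcup_{v \in W_0} D(v)$ and with interpretation $I^*(v,p)(\vec{a}) = 1$ iff $I(u,p)(\vec{a}) = 1$ for every $u \in W_0$ satisfying $v \preceq u$ and $\vec{a} \subseteq D(u)$. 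Heredity of $I^*$ is routine, and $I^*$ agrees with $I$ whenever $\vec{a} \subseteq D(v)$. The key lemma, which I would prove by induction on $\alpha$, is that for every $\alpha \in \FOFml(\sC)$, every $v \in W_0$, and every assignment $\rho$ of $\FV(\alpha)$ into $D^*$,
\[
\|\alpha\|_{\sK^*, v}^\rho = 1 \iff \|\alpha\|_{\sK, u}^\rho = 1 \text{ for every } u \succeq v \text{ with } \mathrm{ran}(\rho) \subseteq D(u)
\]
(vacuously true when no such $u$ exists). Specializing to $\rho$ valued in $D(v)$ would yield $\|\alpha\|_{\sK, v}^\rho = \|\alpha\|_{\sK^*, v}^\rho$, transferring the counter-witness to $\sK^*$. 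The atomic case is by definition of $I^*$; the quantifier cases follow from the induction hypothesis and monotonicity of $D$; the connective case is where supermultiplicativity enters essentially, since one must move $\ttfunc{c}$ across componentwise meets of $\sK$-values over the accessible future worlds---which is precisely the closure of $\ttfunc{c}^{-1}(1)$ under componentwise meets.

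For necessity, I would argue the contrapositive. Suppose some $c \in \sC$ is not supermultiplicative, and fix witnesses $\ba, \bb \in \{0,1\}^{\arity(c)}$ with $\ttfunc{c}(\ba) = \ttfunc{c}(\bb) = 1$ and $\ttfunc{c}(\ba \sqcap \bb) = 0$. Generalizing the classical counterexample $\forall x(P(x) \lor Q) \To \forall x P(x) \lor Q$ for disjunction, I would construct a sequent using only $c$, quantifiers, and atomic predicates that is $\CD$-valid but fails in a non-constant-domain Kripke model on a two-world frame $w_0 \prec w_1$ with $D(w_0) \subsetneq D(w_1)$ (introducing a fresh element $a$ at $w_1$). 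I would choose the atomic predicates position by position so that the truth-vector realized by the left-hand side at each accessible pair (world, element) lies in $\ttfunc{c}^{-1}(1)$---exploiting the flexibility of $\ba$ and $\bb$---while the right-hand side's vector is forced by heredity to equal $\ba \sqcap \bb$ at $w_0$, on which $\ttfunc{c}$ returns $0$. $\CD$-validity would follow by observing that the common domain in any constant-domain model prevents the branching-style split witnessing that the non-constant Kripke frame affords.

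The hardest part will be the connective step in the sufficiency induction, where the strengthened induction hypothesis must be unpacked carefully, the vacuous case (when $\rho$ is not realizable in any accessible future world) must be addressed, and supermultiplicativity must be invoked at exactly the right spot to equate $\ttfunc{c}$ of componentwise meets with the pointwise truth-value condition. The necessity direction's construction is guided by the failure mode of this inductive step, but requires some care in choosing the atoms so that heredity and the $\ttfunc{c}$-condition can be simultaneously satisfied at every accessible (world, element) pair in the Kripke counter-model.
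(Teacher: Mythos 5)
Your sufficiency argument has a genuine gap: the key lemma for the naive union-domain model $\sK^*$ is false, and so is the specialization $\|\alpha\|_{\sK,v}^{\rho}=\|\alpha\|_{\sK^*,v}^{\rho}$ that you want to use for the transfer. Take $W=\{w_0,w_1,w_2\}$ with $w_0\preceq w_1$, $w_0\preceq w_2$ and $w_1,w_2$ incomparable, $D(w_0)=\{e\}$, $D(w_1)=D(w_2)=\{e,d,c_1,c_2\}$, and a binary predicate $q$ with $q(e,e)=1$ at every world, $q(d,c_1)=q(c_1,e)=q(c_2,e)=1$ at $w_1$, $q(d,c_2)=q(c_1,e)=q(c_2,e)=1$ at $w_2$, and all other values $0$ (heredity holds). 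Then $\forall x\exists y\,q(x,y)$ is true at $w_0$ in $\sK$, but false at $w_0$ in your $\sK^*$: for the instance $x\mapsto d$ no single $b$ in the common domain satisfies $I^*(w_0,q)(d,b)=1$, since $c_1$ fails at $w_2$, $c_2$ fails at $w_1$, and $e,d$ fail at both. No connective occurs here, so supermultiplicativity cannot rescue the induction: the $\exists$ step of your lemma breaks exactly when $\mathrm{ran}(\rho)\not\subseteq D(v)$ and the worlds satisfying the range condition split into incomparable branches whose existential witnesses are incompatible, because a single element of $\bigcup_{v}D(v)$ cannot encode branch-dependent choices. This is precisely what the paper's construction is designed to avoid: it first unravels $\sK$ into a tree $\sK'$ (gaining the bar property (II)), and then takes as the constant domain not the union of the $D(v)$ but partial functions $F$ defined on upward-closed subsets of $W'$ that bar the root and are constant along branches; such ``choice-function individuals'' can pick different witnesses on different branches, the partition and extension lemmas (Lemmas~\ref{FOLLemma 6} and \ref{FOLlem: added lemma}) do the work in the quantifier cases, and supermultiplicativity enters only in the connective case, via Lemma~\ref{FOLLemma 3}, to combine finitely many branchwise truth vectors. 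Without some such device your contrapositive argument does not go through.

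On the necessity direction your plan has the right shape and matches the paper's (contrapositive, witnesses $\ba,\bb$ with $\ttfunc{c}(\ba)=\ttfunc{c}(\bb)=1$ and $\ttfunc{c}(\ba\sqcap\bb)=0$, a two-world frame with a growing domain, and a generalization of the constant-domain sequent), but it is only a sketch, and the part you defer---``choosing the atoms with some care''---is where the actual difficulty lies. To get $\CD$-validity one must control $\ttfunc{c}$ not only at $\ba$, $\bb$, $\ba\sqcap\bb$ but also at $\ba\sqcup\bb$, at the modified vectors $\ba^*,\bb^*$ (common-zero coordinates raised to $1$) and at $\bone$; the paper needs a five-case analysis, auxiliary propositional letters $T$ and $R$ to fill the coordinates where $\ba[i]=\bb[i]$, and in one case a derived biconditional $\leftrightarrow_c$ with extra antecedents $R\leftrightarrow_c\forall x\,p(x)$ and $R\leftrightarrow_c\exists x\,q(x)$. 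A single uniform per-coordinate choice of atoms, as your sketch suggests, does not suffice in all cases, so this direction would also need substantial elaboration.
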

We show the ``if'' part in \S~\ref{subsection: The ``if'' part} and the ``only if'' part in \S~\ref
{subsection: The ``only if'' part}.
\subsection{The ``if'' part} 
\label{subsection: The ``if'' part}
Here, we show that if all connectives in $\sC$ are supermultiplicative, then $\FOILS(\sC) = \FOCDS(\sC)$. First, for later use, we prepare one lemma concerning supermultiplicativity.
  \begin{lemma}\label{FOLLemma 3}
  If a connective $c$ is supermltiplicative, then $c$ satisfies the following condition:
for all $n \geq 1$ and all $\ba_1, \ldots, \ba_n \in \{0, 1 \}^{\arity(c)}$, if $\ttfunc{c}(\ba_1) = \cdots = \ttfunc{c}(\ba_n) = 1$ then $\ttfunc{c}(\ba_1 \sqcap \cdots \sqcap \ba_n) = 1$.
  \end{lemma}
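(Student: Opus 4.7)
The plan is a straightforward induction on $n$, since the definition of supermultiplicativity is exactly the binary version ($n=2$) of the statement we want to prove, and $\sqcap$ is associative.

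First, I would handle the base case $n = 1$: the hypothesis $\ttfunc{c}(\ba_1) = 1$ is literally the conclusion $\ttfunc{c}(\ba_1) = 1$ (where $\ba_1 \sqcap \cdots \sqcap \ba_1 = \ba_1$), so there is nothing to prove. I might briefly note $n = 2$ as well to emphasize that this is precisely the definition of supermultiplicativity.

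For the inductive step, assume the claim holds for some $n \geq 2$, and let $\ba_1, \ldots, \ba_{n+1} \in \{0,1\}^{\arity(c)}$ with $\ttfunc{c}(\ba_i) = 1$ for all $i \leq n+1$. By the induction hypothesis applied to $\ba_1, \ldots, \ba_n$, we get $\ttfunc{c}(\ba_1 \sqcap \cdots \sqcap \ba_n) = 1$. Combining this with $\ttfunc{c}(\ba_{n+1}) = 1$ and applying supermultiplicativity (the binary case) to the pair $\ba_1 \sqcap \cdots \sqcap \ba_n$ and $\ba_{n+1}$, we obtain $\ttfunc{c}\bigl((\ba_1 \sqcap \cdots \sqcap \ba_n) \sqcap \ba_{n+1}\bigr) = 1$, which is $\ttfunc{c}(\ba_1 \sqcap \cdots \sqcap \ba_{n+1}) = 1$ by associativity of $\sqcap$ (a trivial consequence of its componentwise definition via $\min$).

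There is no real obstacle here; the only minor point to mention is that $\sqcap$ on $\{0,1\}^{\arity(c)}$ is associative, so that iterated meets $\ba_1 \sqcap \cdots \sqcap \ba_n$ are unambiguously defined and the inductive decomposition $\ba_1 \sqcap \cdots \sqcap \ba_{n+1} = (\ba_1 \sqcap \cdots \sqcap \ba_n) \sqcap \ba_{n+1}$ is valid. This is immediate from the definition $(\ba \sqcap \bb)[i] = \min(\ba[i], \bb[i])$ and the associativity of $\min$.
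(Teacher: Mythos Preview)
Your proof is correct and follows exactly the approach the paper indicates: the paper's own proof simply reads ``This lemma can be shown by easy induction on $n$,'' and your argument spells out precisely that induction.
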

  \begin{proof}
  This lemma can be shown by easy induction on $n$. \qed
  \end{proof}

 Assume all connectives in $\sC$ are supermultiplicative. Since $\FOILS(\sC) \subseteq \FOCDS(\sC)$ holds, in order to prove $\FOILS(\sC) = \FOCDS(\sC)$,  it suffices to show the converse inclusion, and hence it suffices to show the following claim: if $\sK \nvDash \Gamma \To \Delta$ for some $\Gamma \To \Delta \in \FOSqt(\sC)$ and  some Kripke model $\sK = \langle W, \preceq, D, I \rangle$, then $\sK'' \nvDash \Gamma \To \Delta$ for some constant domain Kripke model $\sK''$. We show this claim by generalizing the method in \cite{gabbay1981semantical}, which is used to prove the claim for the usual connectives: for $\sC \subseteq \{ \lnot, \land, \to \}$, $\FOILS(\sC) = \FOCDS(\sC)$ holds.

Before describing the proof, we introduce some definitions. In a pre-ordered set $\langle A, \preceq \rangle$, a \emph{path} from $a \in A$ is a maximal linear subset of $\{ b \in A \mid b \succeq a \}$. Let $\langle A, \preceq \rangle$ be a pre-ordered set. For $a \in A$ and $B \subseteq A$, we say $B$ \emph{bars} $a$ (notation: $a \barred B$) if, for any path $\sP$ from $a$, $B \cap \sP \neq \emptyset$ holds.

Now, we describe how to transform $\sK$ into $\sK''$. Since $\sK \nvDash \Gamma \To \Delta$, there exist some $w_\star \in W$ and some assignment $\rho_\star$ in $D(w_\star)$ such that $\| \Gamma \To \Delta \|_{\sK, w_\star}^{\rho_\star} = 0$. 
First, we tranform $\sK$ into a tree Kripke model $\sK' = \langle W', \preceq', D', I' \rangle$.\footnote{In \cite{gabbay1981semantical}, $\sK$ is transformed into a Beth model, and then the Beth model is transformed into a constant domain Kripke model. In contrast, we do not introduce a Beth model because it is not necessary for the proof, and instead transform $\sK$ into Kripke model $\sK'$, which plays essentially the same role as the Beth model.}
  \begin{definition}
  Let $\Last$ denote the function which assigns to each non-empty finite sequence of elements of $W$ its last component, so that $\Last(w_0, \ldots, w_n) = w_n$. Then,  Kripke model $\sK' = \langle W', \preceq', D', I' \rangle$ consists of
  \begin{itemize}[itemsep=1pt]
  \item $W' = \{ \langle w_\star, w_1, \ldots, w_n \rangle \mid n \geq 0, w_1, \ldots, w_n \in W, w_\star \preceq w_1 \preceq \cdots \preceq w_n \}$;
  \item $w' \preceq' v'$ if and only if $w'$ is an initial segment of $v'$, that is, $\langle w_\star', w_1, \ldots, w_n \rangle \preceq' \langle w_\star', v_1, \ldots, v_m \rangle$ if and only if $n \leq m$ and $w_i = v_i$ for all $i = 1, \ldots, n$;
  \item $D'(w') = D(\Last(w'))$;
  \item $I'(w') = I(\Last(w'), p)$.
  \end{itemize}
We denote by $w'_\star$ the minimum element $\langle w_\star \rangle$. 
  \end{definition}
Then, it can be shown that $\sK'$ has the following two property (cf.~\cite{kripke1965semantical}):
  \begin{enumerate}[label = (\Roman*)]
  \item for any $\alpha \in \FOFml(\sC)$, any $w' \in W'$ and any assignment $\rho'$ in $D'(w') = D(\Last(w'))$, $\| \alpha \|_{\sK', w'}^{\rho'} = \| \alpha \|_{\sK, \Last(w')}^{\rho'}$;
  \item for any $\alpha \in \FOFml(\sC)$, any $w' \in W'$ and any assignment $\rho'$ in $D'(w')$, $\| \alpha \|_{\sK', w'}^{\rho'} = 1$ if and only if $w' \barred \{ v' \succeq' w' \mid \| \alpha \|_{\sK', v'}^{\rho'} = 1 \}$.
  \label{enum: bar property of K'}
  \end{enumerate}
In particular, by \textnormal{(I)}, it holds that $\| \Gamma \To \Delta \|_{\sK', w_\star'}^{\rho_\star} = \| \Gamma \To \Delta \|_{\sK, w_\star}^{\rho_\star} = 0$. 

Now, we transform $\sK'$ into a constant domain Kripke model $\sK''$ such that $\sK'' \nvDash \Gamma \To \Delta$.
  \begin{definition}\label{FOLDefinition 3}
  Constant domain Kripke model $\sK'' = \langle W', \preceq', D'', I'' \rangle$ consists of
    \begin{itemize}[itemsep=2pt]
    \item the pre-ordered set $\langle W', \preceq' \rangle$ is the same as $\sK'$;
    \item $D''$ is the set of those partial functions $F$ from $W'$ to $\bigcup_{w' \in W'} D'(w')$ which satisfy the following conditions:
      \begin{itemize}[itemsep=2pt]
      \item $w_\star' \barred \dom(F)$ (, where $\dom(F)$ denots the domain of $F$);
      \item $\dom(F)$ is an upward-closed subset of $W'$;
      \item $F(w') \in D'(w')$ for any $w' \in \dom(F)$;
      \item if $\dom(F) \owns w' \preceq' v'$ then $F(w') = F(v')$.
      \end{itemize}
    \item $I''(w', p)(F_1, \ldots, F_n) = 1$ if and only if for any $v' \succeq' w'$, if $v' \in \bigcap_{i = 1, \ldots, n} \dom(F_i)$, then $I'(v', p)(F_1(v'), \ldots, F_n(v')) = 1$. In case of $n = 0$, we promise that $\bigcap_{i = 1, \ldots, n} \dom(F_i)$ denotes $W'$. That is, for a propositional symbol $p$, $I''(w', p) = 1$ if and only if $I'(v', p) = 1$ for all $v' \succeq' w'$.
    \end{itemize}
  \end{definition}
Then, the following lemma can be shown immediately:
  \begin{lemma}
  Let $F_1, \ldots, F_n \in D''$. Then, the followings hold.
    \begin{enumerate}[itemsep=2pt, label=\textnormal{(\roman*)}]
    \item $\bigcap_{i = 1, \ldots, n} \dom(F_i)$ is an upward-closed subset of $W'$.
    \item For any $w' \in W'$, $w' \barred \bigcap_{i = 1,\ldots, n} \dom(F_i)$.
    \end{enumerate}
  \end{lemma}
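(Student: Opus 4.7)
The plan is to dispatch (i) by a one-line set-theoretic observation and then attack (ii) by combining the barring hypothesis built into the definition of $D''$ with the tree structure of $\langle W', \preceq' \rangle$ and the linearity of paths.

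For (i), each $\dom(F_i)$ is upward-closed by one of the defining clauses of $D''$, and any intersection of upward-closed subsets of a preordered set is again upward-closed, so $\bigcap_{i=1}^{n} \dom(F_i)$ is upward-closed. The only (trivial) edge case is $n=0$, where by the convention in Definition~\ref{FOLDefinition 3} the intersection is $W'$, which is clearly upward-closed and barred by every $w'$.

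For (ii), the first step I would take is to upgrade the hypothesis ``$w_\star' \barred \dom(F_i)$,'' which is the only barring condition directly supplied by the definition of $D''$, to the stronger assertion that $w' \barred \dom(F_i)$ for every single $w' \in W'$ and every $i$. This is where the tree shape of $W'$ (sequences under initial-segment order, rooted at $w_\star'$) enters: any path $\sP$ from $w'$ extends to a path $\sP^\ast$ from $w_\star'$ by prepending the unique chain from $w_\star'$ to $w'$. Applying $w_\star' \barred \dom(F_i)$ to $\sP^\ast$ produces some $u \in \sP^\ast \cap \dom(F_i)$; if $u \preceq' w'$ then upward-closedness of $\dom(F_i)$ pulls $w'$ itself into $\dom(F_i)$, and $w' \in \sP$; if instead $u \succeq' w'$ then already $u \in \sP \cap \dom(F_i)$. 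Either way $\sP \cap \dom(F_i) \neq \emptyset$.

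The second step is to merge the individual bars into a bar for the intersection. Fix a path $\sP$ from $w'$; by the previous step, for each $i$ we may choose $v_i' \in \sP \cap \dom(F_i)$. Since $\sP$ is linearly ordered by $\preceq'$ and the collection $\{v_1', \ldots, v_n'\}$ is finite, there is a maximum $v' \in \sP$ among them. For every $i$ we have $v' \succeq' v_i' \in \dom(F_i)$, so upward-closedness again gives $v' \in \dom(F_i)$; therefore $v' \in \sP \cap \bigcap_{i=1}^{n}\dom(F_i)$, which is precisely $w' \barred \bigcap_{i=1}^{n}\dom(F_i)$. There is no real obstacle in this argument: the whole content is the observation that, in a tree, barring propagates upward along the order, and that linearity of paths together with finiteness of the index set lets finitely many bars be amalgamated by taking a maximum.
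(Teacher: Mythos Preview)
Your argument is correct. The paper does not actually supply a proof of this lemma, remarking only that it ``can be shown immediately,'' so your write-up simply makes explicit the tree-and-upward-closure reasoning that the authors leave as routine.
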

We shall use this lemma without references.

We prepare two lemmas in order to prove the main lemma, Lemma \ref{lemma: main lemma in if part}.
  \begin{lemma}\label{FOLLemma 6}
  Let $V'$ be an upward-closed subset of $W'$. Then, there is a family $\{ V_i' \mid i \in I \}$ of subsets of $V'$ such that
    \begin{itemize}
    \item $\bigcup_{i \in I} V_i' = V'$;
    \item for all $i, j \in I$, $V_i' \cap V_j' = \emptyset$ if $i \neq j$;
    \item for all $i \in I$, $V_i'$ is an upward-closed subset of $W'$;
    \item for all $i \in I$, $V_i'$ has a minimum element.
    \end{itemize}
  In particular, if $w' \in W'$ and $V' = \{ v' \in W' \mid v' \succeq' w'\}$, then we can take $\{ V' \}$ as $\{ V_i' \mid i \in I \}$.
  \end{lemma}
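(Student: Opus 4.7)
The plan is to exploit the tree structure of $W'$: since every element is a finite sequence of worlds, each $v' \in W'$ has only finitely many prefixes, so any non-empty set of prefixes of $v'$ has a shortest member. For each $v' \in V'$ I would define $m(v')$ to be the shortest prefix of $v'$ that lies in $V'$, which is well-defined because $v'$ itself is such a prefix. Intuitively, $m(v')$ is the point where the path from $w_\star'$ to $v'$ first enters $V'$. Then setting $M = \{ m(v') \mid v' \in V' \}$ and $V_m' = \{ v' \in V' \mid m(v') = m \}$ for each $m \in M$, the desired family is $\{ V_m' \mid m \in M \}$.

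Covering and pairwise disjointness are immediate, since the $V_m'$ are the equivalence classes of the map $v' \mapsto m(v')$. For each $m \in M$, the element $m$ itself lies in $V_m'$: if $m = m(v')$ for some witness $v'$, then any strict prefix of $m$ lying in $V'$ would be a strictly shorter prefix of $v'$ in $V'$, contradicting minimality; so no strict prefix of $m$ belongs to $V'$ and therefore $m(m) = m$. Since $m \preceq' v'$ for every $v' \in V_m'$ by construction, $m$ is the minimum of $V_m'$.

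The main obstacle is verifying upward-closure of $V_m'$: suppose $v' \in V_m'$ and $v' \preceq' u'$. Then $u' \in V'$ by upward-closure of $V'$, and $m$, being a prefix of $v'$, is also a prefix of $u'$; so $m$ is one of the prefixes of $u'$ lying in $V'$, and the task is to show $m(u') = m$. Here the tree-shape of $W'$ is crucial: the prefixes of $u'$ form a finite chain, so the prefixes of $u'$ strictly shorter than $m$ are exactly strict prefixes of $m$, hence also strict prefixes of $v'$; if any such prefix were in $V'$ it would contradict the minimality of $m = m(v')$. Therefore $m(u') = m$ and $u' \in V_m'$.

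For the special case $V' = \{ v' \in W' \mid v' \succeq' w' \}$, note that $w' \in V'$ and that any strict prefix of $w'$ is strictly below $w'$ in $\preceq'$ and hence not in $V'$; moreover every $v' \in V'$ is an extension of $w'$, so $w'$ is the shortest prefix of $v'$ in $V'$. Thus $m(v') = w'$ for every $v' \in V'$, which forces $M = \{ w' \}$ and the family collapses to $\{ V' \}$.
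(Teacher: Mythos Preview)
Your proof is correct and yields the same partition as the paper's, but via a more direct route. The paper defines $\sim'$ as the smallest equivalence relation on $V'$ containing the parent--child relation restricted to $V'$, takes the equivalence classes as the $V_i'$, and then has to argue separately that each class has a minimum: it first shows each class is closed under binary infima, and then runs a descending-chain contradiction using the fact that $W'$ has no infinite strictly decreasing sequences. Your construction instead identifies the minimum elements up front---$m(v')$ is exactly the ``first entry point'' of the branch through $v'$ into $V'$---and lets the fibres of $m$ be the partition, so existence of minima is built in rather than proved afterwards. Both arguments rest on the same finiteness property of $W'$ (each node has only finitely many predecessors), but your version exploits it once at the definition of $m$ and avoids the infimum-closure and descending-chain detour; the paper's version is slightly more abstract but has to do a bit more work to recover the minimum. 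The two partitions coincide: $m(w') = m(v')$ iff $w'$ and $v'$ are connected by a parent--child chain inside $V'$, as your upward-closure argument shows in one direction and upward-closure of $V'$ gives the other.
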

We call $\{V_i' \mid i \in I \}$ a \emph{partition} of $V'$.
  \begin{proof}
  Let $\preceq'_1$ denote the parent--child relation on $W'$, that is, for $w', v' \in W'$, $w' \preceq'_1 v'$ if and only if there exists some $w$ such that $v' = w' * \langle w \rangle$. Let $\sim'$ be the smallest equivalence relation on $V'$ that includes the restriction of $\preceq'_1$ to $V' \times V'$. Then, we can take as $\{V_i' \mid i \in I \}$ the set of all equivalence classes of $\sim'$. The first three conditions can easily be verified.

  Before showing the last condition, let us consider the infimum of two worlds. For $w' \in W'$ and $v' \in W'$, let $w' \wedge v'$ denote the infimum of $\{ w', v' \}$ with respect to $\preceq'$. We show that if $w', v' \in V_i'$, then $w' \wedge v' \in V_i'$. In order to show this claim by contradiction, suppose $w', v' \in V'_i$ and $w' \wedge v' \notin V_i'$. Then, $w' \wedge v' \in V_j'$ for some $j \neq i$. Since $V'_j$ is upward-closed and $w' \wedge v' \preceq' w'$, we have $w' \in V_j'$, and hence $w' \notin V'_i$, which contradicts $w' \in V_i'$.

  Now we show that $V'_i$ has a minimum element. For the sake of contradiction, suppose $V'_i$ does not have a minimum element. First, since $V_i'$ is non-empty, it has some element $v_0'$. By the supposition, there is a $w' \in V_i'$ such that $v_0' \not \preceq' w'$. Put $v_1' = v_0' \wedge w'$. Then, we can immediately see $v_1' \neq v_0'$, and hence, $v_0' \succ' v'_1$. We can repeat the same process infinitely, and then obtain a descending sequence $v_0' \succ' v_1' \succ' v_2' \succ' \cdots$. However, this contradicts the fact that $W'$ is a tree.
  \qed
  \end{proof}
  \begin{lemma}\label{FOLlem: added lemma}
  Let $S'$ be an upward-closed subset of $W'$ such that $w'_\star \barred S'$, and let $w' \in W'$. Put $V' = \{ v' \in W' \mid v' \succeq' w' \} \cap S'$, and let $\{V'_i \mid i \in I \}$ be an partition of $V'$. Furthermore, for each $i \in I$, let $v'_i$ be the minimum element of $V'_i$.
Suppose $a_i \in D'(v_i')$ for each $i \in I$. Then, there is a $G \in D''$ such that $V' \subseteq \dom(G)$ and $G(v_i') = a_i$ for all $i \in I$.
  \end{lemma}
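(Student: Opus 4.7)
The plan is to exhibit an antichain $A \subseteq W'$ containing $N := \{v'_i \mid i \in I\}$ and meeting every path from $w'_\star$, and then to define $G$ with $\dom(G) = \bigcup_{a \in A}\{u' \succeq' a\}$ so that $G$ takes a single constant value on each branch $\{u' \succeq' a\}$. The first observation is that $V'$ is itself upward-closed in $W'$ (being the intersection of the upward-closed sets $\{u' \succeq' w'\}$ and $S'$), and that the $v'_i$'s are pairwise incomparable (otherwise two of the $V'_i$'s would intersect); a routine tree argument then gives $V'_i = \{u' \succeq' v'_i\}$. Hence prescribing $G$ to be constant with value $a_i$ on each $\{u' \succeq' v'_i\}$ takes care of the requirements $V' \subseteq \dom(G)$ and $G(v'_i) = a_i$ automatically.

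To make $\dom(G)$ also bar $w'_\star$, I would adjoin one new root for each path from $w'_\star$ that avoids $w'$. Fix such a path $\sP$, and let $b_\sP$ be the last element shared by $\sP$ with the path $P$ from $w'_\star$ to $w'$ (necessarily $b_\sP \prec' w'$). Set $\sP_1 = \sP \cap \{u' \succ' b_\sP\}$; every element of $\sP_1$ is incomparable to $w'$. Define $s_\sP = \min(\sP_1 \cap S')$. The crucial claim is that $\sP_1 \cap S' \neq \emptyset$: because $w'_\star \barred S'$ there is some $s \in \sP \cap S'$, and if $s \notin \sP_1$ then $s \preceq' b_\sP$, whereupon upward-closedness of $S'$ (applied to $s$) forces the non-empty chain $\sP_1$, which lies strictly above $b_\sP \succeq' s$, entirely into $S'$, contradicting the supposed emptiness. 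Let $\sM = \{s_\sP\}$. A short argument using that the ancestors of any tree node form a chain shows $\sM$ is an antichain; since each element of $\sM$ is incomparable to $w'$ whereas each $v'_i$ satisfies $v'_i \succeq' w'$, no element of $\sM$ is comparable to any element of $N$, and so $A := N \cup \sM$ is an antichain in $W'$.

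Finally, with $\dom(G) = \bigcup_{a \in A}\{u' \succeq' a\}$, define $G(u') = a_i$ whenever $u' \succeq' v'_i$, and $G(u') = b_s$ whenever $u' \succeq' s \in \sM$, where $b_s$ is an arbitrarily chosen element of $D'(s)$. Disjointness of the branches makes $G$ well-defined and automatically constant on every chain inside its domain; the hereditary condition on $D$ yields $a_i \in D'(v'_i) \subseteq D'(u')$ and $b_s \in D'(s) \subseteq D'(u')$ as required. The bar property $w'_\star \barred \dom(G)$ is checked by cases on a path $\sP$ from $w'_\star$: if $\sP$ passes through $w'$, then $\min(\sP \cap S')$ either lies in $V'$ or lies strictly below $w'$ and thereby forces $w'$ itself into $V'$, and in either case the tree structure places some $v'_i \in N$ on $\sP$; if $\sP$ avoids $w'$, then $s_\sP \in \sM$ lies on $\sP$ by construction. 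The main technical difficulty is the choice of $s_\sP$: the naive choice $\min(\sP \cap S')$ can fall below $w'$ on paths that branch off $P$ before $S'$ first meets $P$, which would spoil both the antichain property against $N$ and the constancy of $G$ along chains; restricting the minimum to $\sP_1$ sidesteps this entirely.
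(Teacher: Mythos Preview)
Your argument is correct. The two things that deserve a brief remark are: (1) the existence of $s_\sP$ relies on $\sP_1$ being nonempty, which in turn relies on $W'$ having no maximal elements (true here because any sequence $\langle w_\star,\ldots,w_n\rangle$ extends to $\langle w_\star,\ldots,w_n,w_n\rangle$), and (2) your ``short argument'' that $\sM$ is an antichain does go through: if $s_\sP \prec' s_{\sP'}$ then $s_\sP$ is an ancestor of $s_{\sP'}\in\sP'$, hence $s_\sP\in\sP'$; since $s_\sP$ is incomparable to $w'$ while $b_{\sP'}\preceq' w'$, and $s_\sP,b_{\sP'}$ are both ancestors of $s_{\sP'}$, one gets $s_\sP\succ' b_{\sP'}$, so $s_\sP\in\sP'_1\cap S'$, contradicting minimality of $s_{\sP'}$.

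The paper takes a different route. Rather than building the antichain path by path, it sets $U'=\bigcap_{i\in I}\{u'\in W'\mid u'\text{ incomparable with }v'_i\}$, verifies directly that $U'$ is upward-closed, and then invokes the partition lemma (Lemma~\ref{FOLLemma 6}) a second time to split $U'$ into cones with minima $u'_j$; the function $G$ is then defined on all of $V'\cup U'$. The barring argument is done afterwards, and it is only at that point that $S'$ enters. Your $\dom(G)$ is in general a proper subset of the paper's $V'\cup U'$: you pick only those cones whose roots lie in $S'$. This is harmless but also unnecessary---nothing in the definition of $D''$ refers to $S'$, so you could equally well take $s_\sP=\min\sP_1$ and avoid the extra argument that $\sP_1\cap S'\neq\emptyset$. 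The trade-off is that the paper's approach reuses the partition machinery uniformly (one lemma, applied twice), whereas yours is more hands-on and avoids the second appeal to Lemma~\ref{FOLLemma 6} at the cost of a bespoke antichain verification.
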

Note that, since $V'$ is upward-closed, by Lemma \ref{FOLLemma 6}, it has a partition.
  \begin{proof}
  Put $U' = \bigcap_{i \in I} \{ u' \in W' \mid \text{$u'$ is incomparable with $v'_i$} \}$. Then, $U' \cap V' = \emptyset$. First, we show that $U'$ is upward-closed. For the sake of contradiction, suppose $u' \preceq' v'$, $u' \in U'$ and $v' \notin U'$. Then, there is some $i$ such that $v'$ and $v'_i$ are comparable. Thus, either $v'_i \preceq' v'$ or $v' \preceq' v'_i$. If the latter holds, then we have $u' \preceq' v' \preceq' v'_i$, and hence, $u' \notin U'$, which contradicts $u' \in U'$. Hence, we have $v'_i \preceq' v'$. Since $W'$ is a tree, from $v'_i \preceq' v'$ and $u' \preceq' v'$, we can see $v'_i$ and $u'$ are comparable. This contradicts $u' \in U'$. Thus, we have shown that $U'$ is upward-closed. Hence, $V' \cup U'$ is also upward-closed. By Lemma \ref{FOLLemma 6}, there is a partition $\{ U'_j \mid j \in J \}$ of $U'$. For each $j \in J$, let $u'_j$ be the minimum element of $U'_j$. Note that $\{ V'_i \mid i \in I \} \cup \{ U'_j \mid j \in J\}$ is a partition of $V' \cup U'$.

  Now, we show $w'_\star \barred V' \cup U'$. Let $\sP$ be a path from $w'_\star$. First, we consider the case in which $w' \notin \sP$. In this case, we show $U' \cap \sP \neq \emptyset.$ Put $v' = \max \{t' \in \sP \mid t' \preceq' w' \}$. Then, $v' \prec' w'$ follows from $w' \notin \sP$. Since $W'$ is a tree, $\{u' \in W' \mid u' \preceq' v' \} \subseteq \sP$ holds. Furthermore, since $\sP$ is a path, $\{ u' \in W' \mid u' \preceq' v'\} \subsetneq \sP$ holds.  That is, there exists some $u' \in \sP$ such that $u' \not \preceq' v'$. Since $v', u' \in \sP$, $v' \prec' u'$ follows from $u' \not \preceq' v'$. By $w' \notin \sP$, $v' = \max \{ t' \in \sP \mid t' \preceq' w' \}$, $v' \prec' w'$ and $v' \prec' u'$, it can easily be seen that $w'$ and $u'$ are incomparable. Now, we show $u' \in U'$. For the sake of contradiction, suppose $u' \notin U'$. Then, there exists some $i \in I$ such that $v'_i$ and $u'$ are comparable. If $v'_i \preceq' u'$, then $u' \in V'$, and hence, $w' \preceq' u'$, which contradicts the fact that $w'$ and $u'$ are incomparable. Hence, we have $u' \prec' v'_i$. In addition, $w' \preceq' v'_i$ holds by $v'_i \in V'$. Hence, since $W'$ is a tree, $u'$ and $w'$ are comparable. However, this contradicts the fact that $w'$ and $u'$ are incomparable. Thus, our assumption turned out to be false, and hence, we have shown $u' \in U' \cap \sP$ in the case $w' \notin \sP$. Now, we consider the case in which $w' \in \sP$. In this case, we show $V' \cap \sP \neq \emptyset$.
Since $w'_\star \barred S'$, $\sP \cap S'$ has an element, say, $s'$. Since $w', s' \in \sP$, $w'$ and $s'$ are comparable. If $s' \succeq' w'$, then $s' \in V'$. If $w' \succeq' s'$, then, since $S'$ is upward-closed, $w' \in S'$, and hence, $w' \in V'$. Thus, we have completed the proof of $w'_\star \barred V' \cup U'$.

  Taking arbitrary elements $b_j \in D'(u'_j)$ for each $j \in J$, we define a function $G \colon V' \cup U' \to \bigcup_{w' \in W'} D'(w')$ as follows:
    \[
    G(v') =
      \begin{cases}
      a_i & \text{if $v' \in V'_i$ with $i \in I$} \\
      b_j & \text{if $v' \in U'_j$ with $j \in J$}.
      \end{cases}
    \]
  Then, this $G$ is the desired function.
  \qed
  \end{proof}

The following lemma ensures that the values of formulas in $\sK'$ are preserved in some sense. Before describing the lemma, we prepare $\lambda$-notation. Let $V$ be a set of individual variables and $\mathfrak{E}(x)$ an expression of our meta-language that denotes some value for each $x \in V$. Then $\lam{x \in V}\mathfrak{E}(x)$ denotes the function whose domain is $V$ and whose value at each argument $x$ is $\mathfrak{E}(x)$. For example, the expression $\lam{x \in \FV(\alpha)}{\rho''(x)(v')}$ in the following lemma denotes the partial assignment in $D'(v')$ that assigns $\rho''(x)(v') \in D'(v')$ to each $x \in \FV(\alpha)$.
  \begin{lemma}\label{lemma: main lemma in if part}
  Let $\alpha \in \FOFml(\sC)$. Then, the following conditions are equivalent:
    \begin{enumerate}[label = \textnormal{(\roman*)}]
    \item $\| \alpha \|_{\sK'', w'}^{\rho''} = 1$.
    \label{enum: main lemma i}
    \item For any $v' \succeq' w'$, if $v' \in \bigcap_{x \in \FV(\alpha)} \dom(\rho''(x))$ then $\| \alpha \|_{\sK', v'}^{\lam{x \in \FV(\alpha)}{\rho''(x)(v')}} = 1$.
    \label{enum: main lemma ii}
    \end{enumerate}
  If $\FV(\alpha) = \emptyset$, we promise that $\bigcap_{x \in \FV(\alpha)} \dom(\rho''(x))$ denotes $W'$.
  \end{lemma}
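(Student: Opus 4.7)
My plan is to induct on the complexity of $\alpha$, proving both implications simultaneously. The atomic case $\alpha = p(x_1, \ldots, x_n)$ is immediate: unfolding the definition of $I''$ turns (i) and (ii) into the same statement. For $\forall x \alpha$, the direction (i)$\Rightarrow$(ii) uses Lemma \ref{FOLlem: added lemma} (with $S' = W'$) to realize an arbitrary $a \in D'(u')$ as $G(u')$ for some $G \in D''$, then invokes the induction hypothesis on $\alpha$; the converse just applies the induction hypothesis to a suitable extension of $\rho''$. For $\exists x \alpha$, (ii)$\Rightarrow$(i) is the serious direction: setting $T = \bigcap_{y \in \FV(\exists x \alpha)} \dom(\rho''(y))$, I would partition the upward-closed set $V' = T \cap \{v' \succeq' w'\}$ via Lemma \ref{FOLLemma 6}, pick at each minimum element $v'_i$ a witness $a_i \in D'(v'_i)$ supplied by (ii), and glue the $a_i$ via Lemma \ref{FOLlem: added lemma} (applied with $S' = T$) into a single $G \in D''$ with $G(v'_i) = a_i$; hereditariness of $\sK'$ together with the induction hypothesis then verifies $\|\alpha\|_{\sK'',w'}^{\rho''[x \mapsto G]} = 1$. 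The direction (i)$\Rightarrow$(ii) of $\exists$ extracts the witness $G$ from (i), uses the bar $v' \barred (\dom(G) \cap T)$ and the induction hypothesis to obtain existential witnesses in $\sK'$ on a bar of $v'$, and then invokes property (II) to push the existential down to $v'$.

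The connective case is the crucial step, and the only one that uses supermultiplicativity. Fix $c \in \sC$, let $T = \bigcap_{x \in \FV(c(\vec\alpha))} \dom(\rho''(x))$, and abbreviate $c_j(v') = \|\alpha_j\|_{\sK',v'}^{\lam{x \in \FV(\alpha_j)}{\rho''(x)(v')}}$. The direction (i)$\Rightarrow$(ii) is immediate from hereditariness and the induction hypothesis applied to each $\alpha_j$ at each $v' \succeq' w'$ in $T$. For the converse, fix $u' \succeq' w'$ and let $b_j = \|\alpha_j\|_{\sK'',u'}^{\rho''}$; the key claim is $b_j = \min \{ c_j(v') \mid v' \succeq' u',\; v' \in T\}$. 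The forward inequality is a direct consequence of the induction hypothesis, since $T$ is contained in the a priori larger intersection $\bigcap_{x \in \FV(\alpha_j)} \dom(\rho''(x))$. For the reverse inequality, for any $v'' \succeq' u'$ in the larger intersection, the set of $v''' \succeq' v''$ with $v''' \in T$ bars $v''$ and $c_j$ is identically $1$ on that set by hypothesis (using that every element of $D''$ is constant on comparable points of its domain, so the assignments defining $c_j(v''')$ and $c_j(v'')$ agree); property (II) then gives $c_j(v'') = 1$, and the induction hypothesis yields $b_j = 1$. Hence $\vec b \in \{0,1\}^n$ is the infimum of the $\vec c(v')$'s, and since $\{0,1\}^n$ is finite, this infimum is attained by a finite subfamily $\vec c(v'_1) \sqcap \cdots \sqcap \vec c(v'_k)$; each $\ttfunc{c}(\vec c(v'_i)) = 1$ by (ii), so Lemma \ref{FOLLemma 3} gives $\ttfunc{c}(\vec b) = 1$, which is (i) at $u'$.

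The main obstacle is this converse direction of the connective case, specifically the identification of $\vec b$ as a finite infimum of vectors already certified by (ii). A priori $b_j$ and the $c_j(v')$ are governed by different domain intersections, and reconciling them requires both the bar property (II) of $\sK'$ and the constancy clause in the definition of $D''$; the payoff is that the infimum lives in the finite lattice $\{0,1\}^n$, so Lemma \ref{FOLLemma 3}---iterated supermultiplicativity---delivers $\ttfunc{c}(\vec b) = 1$.
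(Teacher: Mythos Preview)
Your proposal is correct and follows the same inductive strategy as the paper, with identical treatment of the atomic case and the quantifier cases (the same invocations of Lemmas~\ref{FOLLemma 6} and~\ref{FOLlem: added lemma} and property~(II) at the same places). The only difference is organizational, in the connective step $(\mathrm{ii})\Rightarrow(\mathrm{i})$: you establish the identity $b_j = \min\{c_j(v') \mid v' \succeq' u',\ v' \in T\}$ and then use finiteness of $\{0,1\}^n$ to write $\vec b$ as a finite meet before applying Lemma~\ref{FOLLemma 3}, whereas the paper separates the cases $\vec b = \bone$ and $\vec b \neq \bone$ and, for each coordinate $i$ with $b_i = 0$, explicitly produces a world $t'_i \in T$ with $c_i(t'_i) = 0$ by a path argument (property~(II) in contrapositive form)---your ``reverse inequality'' is precisely that construction run contrapositively, so the two arguments are equivalent, with yours avoiding the case split.
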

Note that if $w' \in \bigcap_{x \in \FV(\alpha)} \dom(\rho''(x))$, then $(\mathrm{ii})$ is equivalent to $\| \alpha \|_{\sK', w'}^{\lam{x \in \FV(\alpha)}{\rho''(x)(w')}} = 1$.

\begin{proof}
The proof proceeds by induction on $\alpha$.\footnote{The proofs of cases 1, 3 and 4 are essentially the same as in \cite{gabbay1981semantical}.}

\begin{description}[itemsep = 5pt, listparindent  = 10pt, leftmargin = 0pt]
\item [Case 1:] $\alpha \equiv p(x_1, \ldots, x_n)$. This case follows from the definition of $I''$.
\item [Case 2:] $\alpha \equiv c(\vec{\beta})$, where $\vec{\beta} = \beta_1, \ldots, \beta_{\arity(c)}$. Put $n = \arity(c)$.
\begin{description}[itemsep = 3pt, listparindent=10pt, leftmargin = 5pt, itemindent=6pt, topsep=2pt]
\item [$\ref{enum: main lemma i} \Rightarrow \ref{enum: main lemma ii}$:] Suppose \ref{enum: main lemma i} holds. Let $v' \succeq' w'$ and $v' \in \bigcap_{x \in \FV(\alpha)}\dom(\rho''(x))$. In order to show $\| c(\vec{\beta})\|_{\sK', v'}^{\lam{x \in \FV(\alpha)}{\rho''(x)(v')}} = 1$, we take an arbitrary $u' \succeq' v'$, and show that $\ttfunc{c}(\| \vec{\beta} \|_{\sK', u'}^{\lam{x \in \FV(\alpha)}{\rho''(x)(v')}}) = 1$. For each $1 \leq i \leq n$, by the induction hypothesis, we have
$\| \beta_i \|_{\sK'', u'}^{\rho''} = \| \beta_i \|_{\sK', u'}^{\lam{x \in \FV(\beta_i)}{\rho''(x)(u')}} = \| \beta_i \|_{\sK', u'}^{\lam{x \in \FV(\beta)}{\rho''(x)(v')}}$. Thus, we have $\ttfunc{c}(\| \vec{\beta} \|_{\sK'', u'}^{\rho''}) = \ttfunc{c}(\| \vec{\beta} \|_{\sK', u'}^{\lam{x \in \FV(\alpha)}{\rho''(x)(v')}})$, the left hand side of which equals to $1$ by the supposition \ref{enum: main lemma i}.
\item [$\ref{enum: main lemma ii} \Rightarrow \ref{enum: main lemma i}$:] Suppose \ref{enum: main lemma ii} holds. We show that $\| c(\vec{\beta})\|_{\sK'', w'}^{\rho''} = 1$. In order to prove this, we suppose $v' \succeq' w'$ and show that $\ttfunc{c}(\| \vec{\beta} \|_{\sK'', v'}^{\rho''}) = 1$. First, we consider the case $\| \vec{\beta} \|_{\sK'', v'}^{\rho''} = \bone$. Then, since $v' \barred \bigcap_{x \in \FV(\alpha)} \dom(\rho''(x))$, there is some $u' \succeq' v'$ such that $u' \in \bigcap_{x \in \FV(\alpha)} \dom(\rho''(x))$. For any $1 \leq i \leq n$, by the hereditary we have $\| \beta_i \|_{\sK'', u'}^{\rho''} \geq \| \beta_i \|_{\sK'', v'}^{\rho''} = 1$, and hence, by the induction hypothesis, we have $\| \beta_i \|_{\sK', u'}^{\lam{x \in \FV(\beta_i)}{\rho''(x)(u')}} = 1$. Thus, we have $\| \vec{\beta} \|_{\sK'', v'}^{\rho''} = \bone = \| \vec{\beta} \|_{\sK', u'}^{\lam{x \in \FV(\alpha)}{\rho''(x)(u')}}$. On the other hand, since $\| \alpha \|_{\sK', u'}^{\lam{x \in \FV(\alpha)}{\rho''(x)(u')}} = 1$ holds by the assumption $(\mathrm{ii})$, we have $\ttfunc{c}(\| \vec{\beta} \|_{\sK',u'}^{\lam{x \in \FV(\alpha)}{\rho''(x)(u')}}) = 1$. Combining these results, we have $\ttfunc{c}(\| \vec{\beta} \|_{\sK'', v'}^{\rho''}) = 1$.

Secondly, we consider the case $\| \vec{\beta} \|_{\sK'', v'}^{\rho''} \neq \bone$. Put $I = \{1 \leq i \leq n \mid \| \beta_i \|_{\sK'', v'}^{\rho''} = 0 \}$ and $J = \{ 1 \leq i \leq n \mid \| \beta_i \|_{\sK'', v'}^{\rho''} = 1 \}$. Note that $I \neq \emptyset$. Now, it suffices to show that for each $i \in I$, there exists a $t'_i \in W'$ such that
$t'_i \in \bigcap_{x \in \FV(\alpha)} \dom(\rho''(x))$;
$\| \beta_i \|_{\sK', t'_i}^{\lam{x \in \FV(\alpha)}{\rho''(x)(t'_i)}} = 0$; $\| \beta_j \|_{\sK', t'_i}^{\lam{x \in \FV(\alpha)}{\rho''(x)}(t'_i)} = 1$ for all $j \in J$; and $\ttfunc{c}(\| \vec{\beta} \|_{\sK' ,t'_i}^{\lam{x \in \FV(\alpha)}{\rho''(x)(t'_i)}}) = 1$. This is because, for such $t'_i$'s, if we take as $\{ \ba_1, \ldots, \ba_n \}$ in Lemma \ref{FOLLemma 3} the set $\{ \| \vec{\beta} \|_{\sK', t'_i}^{\lam{x \in \FV(\alpha)}{\rho''(x)}} \mid i \in I \}$, then it holds that
\[
\ba_1 \sqcap \cdots \sqcap \ba_n = \bigsqcap_{i \in I} \| \vec{\beta} \|_{\sK', t'_i}^{\lam{x \in \FV(\alpha)}{\rho''(x)}} = \| \vec{\beta} \|_{\sK'', v'}^{\rho''},
\] and hence, $\ttfunc{c}(\| \vec{\beta} \|_{\sK'', v'}^{\rho''}) = 1$ follows from Lemma \ref{FOLLemma 3}. So, we fix an arbitrary $i \in I$ and show that such $t'_i \in W'$ exists. By the induction hypothesis for $\beta_i$, there is some $u' \succeq' v'$ such that $u' \in \bigcap_{x \in \FV(\beta_i)}{\dom(\rho''(x))}$ and $\| \beta_i \|_{\sK', u'}^{\lam{x \in \FV(\beta_i)}{\rho''(x)(u')}} = 0$. By the property \ref{enum: bar property of K'} of $\sK'$, $u' \not \barred \{ r' \succeq' u' \mid \| \beta_i \|_{\sK', r'}^{\lam{x \in \FV(\beta_i)}{\rho''(x)(u')}} = 1 \}$ holds. Hence, there is some path $\sP$ from $u'$ such that $\sP \cap \{ r' \succeq' u' \mid \| \beta_i \|_{\sK', r'}^{\lam{x \in \FV(\beta_i)}{\rho''(x)(u')}} =1 \} = \emptyset$. On the other hand, since $u' \barred \bigcap_{x \in \FV(\alpha)} \dom(\rho''(x))$, $\sP$ and $\bigcap_{x \in \FV(\alpha)} \rho''(x)$ intersect at some point, say, $t'_i \in W'$. Then, we have $t'_i \succeq' u' \succeq' v'$, $t'_i \in \bigcap_{x \in \FV(\alpha)}\dom(\rho''(x))$ and 
  \[
  \| \beta_i \|_{\sK', t_i'}^{\lam{x \in \FV(\alpha)}{\rho''(x)(t_i')}} = \| \beta_i \|_{\sK', t_i'}^{\lam{x \in \FV(\alpha)}{\rho''(x)(u')}} = 0.
  \]
 By the hereditary, we have $\| \beta_j \|_{\sK'', t'_i}^{\rho''} \geq \| \beta_j \|_{\sK'', v'}^{\rho''} = 1$ for all $j \in J$. Hence, by the induction hypothesis we have $\| \beta_j \|_{\sK', t'_i}^{\lam{x \in \FV(\alpha)}{\rho''(x)(t'_i)}} = 1$ for all $j \in J$. Finally, since $\| c(\vec{\beta}) \|_{\sK', t'_i}^{\lam{x \in \FV(\alpha)}{\rho''(x)(t'_i)}} = 1$ holds by the assumption \ref{enum: main lemma ii}, we have $\ttfunc{c}(\| \vec{\beta} \|_{\sK', t'_i}^{\lam{x \in \FV(\alpha)}{\rho''(x)(t'_i)}}) = 1$. Thus, we have proved that $t'_i$ satisfies the desired conditions.
\end{description} 
\item [Case 3:] $\alpha \equiv \forall y \beta$. We only consider the case $y \in \FV(\beta)$, since the other case is trivial.
\begin{description}[itemsep = 3pt, listparindent=10pt, leftmargin = 5pt, itemindent=6pt, topsep=2pt]
\item [$\ref{enum: main lemma i} \Rightarrow \ref{enum: main lemma ii}$:] Suppose \ref{enum: main lemma i} holds. We suppose $v' \succeq' w'$ and $v' \in \bigcap_{x \in \FV(\alpha)} \dom(\rho''(x))$, and show $\| \forall y \beta \|_{\sK', v'}^{\lam{x \in \FV(\alpha)}{\rho''(x)(v')}} = 1$, that is, $\| \beta \|_{\sK', u'}^{\left( \lam{x \in \FV(\alpha)}{\rho''(x)(v')} \right)[y \mapsto a]} = 1$ for all $u' \succeq v'$ and all $a \in D'(u')$. Let $u' \succeq' v'$ and $a \in D'(u')$.
Then, by Lemma \ref{FOLlem: added lemma}, with $w'$, $S'$ and $\{V'_i \mid i \in I \}$ there taken to be $u'$, $W'$ and $\{ \{ t' \in W' \mid t' \succeq' u' \} \}$, there is some $G \in D''$ such that $G(u') = a$.
By the hypothesis $\| \forall y \beta \|_{\sK'', w'}^{\rho''} = 1$, we have $\| \beta \|_{\sK'', u'}^{\rho''[y \mapsto G]} = 1$. By the induction hypothesis, we have $\| \beta \|_{\sK', u'}^{\lam{x \in \FV(\beta)}{\rho''[y \mapsto G](x)(u')}} = 1$. Since
\[
\left( \lam{x \in \FV(\alpha)}{\rho''(x)(v')} \right) [y \mapsto a] = \lam{x \in \FV(\beta)}{\rho''[y \mapsto G](x)(u')},
\]
$\| \beta \|_{\sK', u'}^{\left( \lam{x \in \FV(\alpha)}{\rho''(x)(v')} \right) [y \mapsto a]} = 1$ follows.
\item [$\ref{enum: main lemma ii} \Rightarrow \ref{enum: main lemma i}$:] Suppose \ref{enum: main lemma ii} holds. We show $\| \alpha \|_{\sK'', w'}^{\rho''} = 1$, that is, $\| \beta \|_{\sK'', w'}^{\rho''[y \mapsto G]} = 1$ for all $G \in D''$. Let $G \in D''$. Then, by the induction hypothesis, $\| \beta \|_{\sK'', w'}^{\rho''[y \mapsto G]} = 1$ if and only if, for all $v' \succeq' w'$, $v' \in \bigcap_{x \in \FV(\alpha)} \dom(\rho''(x)) \cap \dom(G)$ implies $\| \beta \|_{\sK', v'}^{\lam{x \in \FV(\beta)}{\rho''[y \mapsto G](x)(v')}} = 1$.
Hence, in order to show $\| \beta \|_{\sK'', w'}^{\rho''[y \mapsto G]} = 1$, we suppose $v' \succeq' w'$ and $v' \in \bigcap_{x \in \FV(\alpha)} \dom(\rho''(x)) \cap \dom(G)$ and show $\| \beta \|_{\sK', v'}^{\lam{x \in \FV(\beta)}{\rho''[y \mapsto G](x)(v')}} = 1$. By \ref{enum: main lemma ii}, $\| \beta \|_{\sK', v'}^{\left( \lam{x \in \FV(\alpha)}{\rho''(x)(v')} \right) [y \mapsto G(v')]} = 1$. Thus, we have
\[
\| \beta \|_{\sK', v'}^{\lam{x \in \FV(\beta)}{\rho''[y \mapsto G](x)(v')}} = \| \beta \|_{\sK', v'}^{\left( \lam{x \in \FV(\alpha)}{\rho''
(x)(v')} \right) [y \mapsto G(v')]} = 1 .
\]
\end{description}
\item [Case 4:] $\alpha \equiv \exists y \beta$. We only consider the case $y \in \FV(\beta)$, since the other case is trivial.
\begin{description}[itemsep = 3pt, listparindent=10pt, leftmargin = 5pt, itemindent=6pt, topsep=2pt]
\item [$\ref{enum: main lemma i} \Rightarrow \ref{enum: main lemma ii}$:] Suppose \ref{enum: main lemma i} holds. Then, there is some $G \in D''$ such that $\| \beta \|_{\sK'', w'}^{\rho''[y \mapsto G]} = 1$. Suppose $v' \succeq' w'$ and $v' \in \bigcap_{x \in \FV(\alpha)} \dom(\rho''(x))$, in order to show that $\| \exists y \beta \|_{\sK', v'}^{\lam{x \in \FV(\alpha)}{\rho''(x)(v')}} = 1$. By the property \ref{enum: bar property of K'} of $\sK'$, it suffices to show $v' \barred \{ u' \succeq' v' \mid \| \exists y \beta \|_{\sK', u'}^{\lam{x \in \FV(\alpha)}{\rho''(x)(v')}} = 1\}$. Let $\sP$ be any path from $v'$. Then, since $v' \barred \bigcap_{x \in \FV(\beta)} \dom(\rho''[y \mapsto G](x))$, there is some $u' \in \sP \cap \bigcap_{x \in \FV(\beta)} \dom(\rho''[y \mapsto G](x))$.
By the induction hypothesis, $\| \beta \|_{\sK', u'}^{\lam{x \in \FV(\beta)}{\rho''[y \mapsto G](x)(u')}} = 1$ follows from $\| \beta \|_{\sK'', w'}^{\rho''[y \mapsto G]} = 1$.
Hence, we have
\[
\| \beta \|_{\sK', u'}^{\bigl( \lam{x \in \FV(\alpha)}{\rho''(x)(u')} \bigr) [y \mapsto G(u')]} = \| \beta \|_{\sK', u'}^{\lam{x \in \FV(\beta)}{\rho''[y \mapsto G](x) (u')}} = 1.
\]
Hence, we have $\| \exists y \beta \|_{\sK', u'}^{\lam{x \in \FV(\alpha)}{\rho''(x)(v')}} = \| \exists y \beta \|_{\sK', u'}^{\lam{x \in \FV(\alpha)}{\rho''(x)(u')}} = 1$. Thus, we have proved $v' \barred \{ u' \succeq' v' \mid \| \exists y \beta \|_{\sK', u'}^{\lam{x \in \FV(\alpha)}{\rho''(x)(v')}} = 1\}$.
\item [$\ref{enum: main lemma ii} \Rightarrow \ref{enum: main lemma i}$:] Suppose \ref{enum: main lemma ii} holds. Put $V':= \{v' \in W' \mid v' \succeq' w' \} \cap \bigcap_{x \in \FV(\alpha)} \dom(\rho''(x))$.
By Lemma \ref{FOLLemma 6} , $V'$ has a partition $\{ V'_i \mid i \in I \}$. For each $i \in I$, let $v'_i$ be the minimum element of $V'_i$.Then, by \ref{enum: main lemma ii} we have $\| \exists y \beta \|_{\sK', v_i'}^{\lam{x \in \FV(\alpha)}{\rho''(x)(v'_i)}} = 1$ for each $i \in I$. Hence, for each $i \in I$, there is some $a_i \in D'(v'_i)$ such that $\| \beta \|_{\sK', v'_i}^{\bigl( \lam{x \in \FV(\alpha)}{\rho''(x)(v'_i)} \bigr) [y \mapsto a _i]} = 1$. By Lemma \ref{FOLlem: added lemma}, there is some $G \in D''$ such that $V' \subseteq \dom(G)$ and $G(v_i') = a_i$ for all $i \in I$. In order to prove the goal, $\| \exists y \beta \|_{\sK', w'}^{\rho''} = 1$, we show that $\| \beta \|_{\sK'', w'}^{\rho''[y \mapsto G]} = 1$. By the induction hypothesis, it suffices to show $\| \beta \|_{\sK', v'}^{\lam{x \in \FV(\beta)}{\rho''[y \mapsto G](x)(v')}} = 1$ for any $v' \succeq' w'$ with $v' \in \bigcap_{x \in \FV(\beta)} \dom(\rho''[y \mapsto G](x))$. Furthermore, since
\[
\{ u' \in W' \mid u' \succeq' w' \} \cap \bigcap_{x \in \FV(\beta)} \dom(\rho''[y \mapsto G](x)) = V' \cap \dom(G) = V',
\]
it suffices to show $\| \beta \|_{\sK', v'}^{\lam{x \in \FV(\beta)}{\rho''[y \mapsto G](x)(v')}} = 1$ for any $v' \in V'$.
Moreover, by the definition of $v'_i$'s, it suffices to show $\| \beta \|_{\sK', v_i'}^{\lam{x \in \FV(\beta)}{\rho''[y \mapsto G](x)(v'_i)}} = 1$ for all $i \in I$. By the definition of $G$, immediately we have $\lam{x \in \FV(\beta)}{\rho''[y \mapsto G](x)(v_i')} = \bigl( \lam{x \in \FV(\alpha)}{\rho''(x) (v'_i)} \bigr) [y \mapsto a_i]$ for all $i \in I$. Hence, for all $i \in I$, we have
\[
\| \beta \|_{\sK', v'_i}^{\lam{x \in \FV(\beta)}{\rho''[y \mapsto G](x)(v'_i)}} = \| \beta \|_{\sK', v'_i}^{\bigl( \lam{x \in \FV(\alpha)}{\rho''(x)(v'_i)} \bigr)[y \mapsto a_i]} = 1.
\]
\end{description}
\end{description}
\qed
\end{proof}

From Lemma \ref{lemma: main lemma in if part}, it follows that $\| \Gamma \To \Delta \|_{\sK'', w'_\star}^{\rho''} = \| \Gamma \To \Delta \|_{\sK', w'_\star}^{\rho_\star} = 0$, where $\rho''$ is the assignment in $D''$ such that for each free variables $x$ in $\Gamma$ and $\Delta$, $\rho''(x)$ is the function on $W'$ whose value is constantly $\rho_\star(x) \in D'(w'_\star)$. Thus, we have finished the proof of the ``if'' part of Theorem \ref{theorem: main theorem}.


\subsection{The ``only if'' part}
\label{subsection: The ``only if'' part}
Here, we show the ``only if'' part of Theorem \ref{theorem: main theorem} by showing its contraposition:
  \begin{proposition}\label{proposition: contraposition of the only if part}
  Suppose there is a non-supermultiplicative $c \in \sC$.
  Then, it holds that $\FOILS(\sC) \neq \FOCDS(\sC)$.
  \end{proposition}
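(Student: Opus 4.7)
The plan is to exhibit a single sequent that lies in $\FOCDS(\sC)\setminus\FOILS(\sC)$. Since $c$ is non-supermultiplicative, fix $\ba,\bb\in\{0,1\}^{\arity(c)}$ with $\ttfunc{c}(\ba)=\ttfunc{c}(\bb)=1$ but $\ttfunc{c}(\ba\sqcap\bb)=0$. These $\ba,\bb$ are necessarily incomparable (otherwise $\ba\sqcap\bb$ would coincide with one of them and get value $1$), so both $V_1=\{i:a_i=1,b_i=0\}$ and $V_0=\{i:a_i=0,b_i=1\}$ are nonempty. After a harmless permutation of coordinates I would arrange a distinguished index $i_0\in V_1$ so that $a_j\leq b_j$ for all $j\neq i_0$; this is the analogue of choosing which argument of $c$ will play the role of $P(x)$ and which will play the role of the constant $Q$ in the usual CD axiom for $\lor$.

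The sequent I propose is the following generalization of the constant-domain axiom: introduce one fresh unary predicate symbol $P$ (to sit in position $i_0$) and fresh $0$-ary predicate symbols $R_j$ for $j\ne i_0$, and set
$$S:\ \forall x\,c\bigl(\ldots,R_j,\ldots,P(x),\ldots\bigr)\To c\bigl(\ldots,R_j,\ldots,\forall x\,P(x),\ldots\bigr).$$
I would first verify that $S\in\FOCDS(\sC)$. Assume the antecedent at $w$ in a constant-domain model with domain $D$, fix $v\succeq w$, and write $\vec q=\langle(R_j)_v\rangle$, $p=\|\forall x P(x)\|_v$. If $p=1$, then $P(a)_u=1$ for every $u\succeq v$ and $a\in D$, so instantiating the antecedent at $(v,a)$ already yields $\ttfunc{c}(1,\vec q)=1$. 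If $p=0$, pick $u\succeq v$ and $a\in D$ with $P(a)_u=0$; because the domain is constant, $a\in D(v)$, so the antecedent at $(v,a)$ gives $\ttfunc{c}(P(a)_v,\vec q)=1$, and the hereditary condition $P(a)_v\leq P(a)_u=0$ forces $P(a)_v=0$, so $\ttfunc{c}(0,\vec q)=1$. Either way the succedent tuple at $v$ gets value $1$.

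To see that $S\notin\FOILS(\sC)$, I would construct a two-world counter-model $\sK=\langle W,\preceq,D,I\rangle$ with $W=\{w_0,w_1\}$, $w_0\prec w_1$, $D(w_0)=\{a\}$, $D(w_1)=\{a,b\}$; set $P(a)_{w_0}=P(a)_{w_1}=1$, $P(b)_{w_1}=0$, and $(R_j)_{w_k}$ equal to $a_j$ at $w_0$ and to $b_j$ at $w_1$. The hereditary condition on each $R_j$ holds precisely because $a_j\leq b_j$ for $j\ne i_0$. At the three relevant evaluation points $(w_0,a)$, $(w_1,a)$, $(w_1,b)$ the $c$-tuple equals $\ba$, an ``intermediate'' vector $\ba'$ (coordinate $i_0$ is $1$, the others are $b_j$), and $\bb$ respectively, and all three lie in $\ttfunc{c}^{-1}(1)$, so the antecedent of $S$ holds at $w_0$. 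But $\|\forall xP(x)\|_{w_0}=\min(1,1,0)=0$ and $(R_j)_{w_0}=a_j$, so the succedent's tuple at $w_0$ is exactly $\ba\sqcap\bb$, whose $\ttfunc{c}$-value is $0$; the succedent fails and thus $\sK\not\models S$.

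The principal obstacle I anticipate is the normalization step preceding the construction: for a general non-supermultiplicative $c$, the original non-supermultiplicative pair need not satisfy $a_j\leq b_j$ for all $j\neq i_0$, and likewise the intermediate vector $\ba'$ arising at $(w_1,a)$ must still lie in $\ttfunc{c}^{-1}(1)$ for the antecedent to hold there. For monotonic $c$ both points are automatic from $\ba'\geq\ba$; for non-monotonic $c$ (such as $\oplus$) one must work harder, either by showing that some non-supermultiplicative pair of the required shape always exists by a flipping argument on the coordinates of $V_1$, or by replacing the single $P(x)$ by several unary predicates $P_i(x)$ indexed by $V_1$ and adjusting the CD-validity case analysis accordingly—the book-keeping here is the most delicate part of the argument.
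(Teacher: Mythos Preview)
Your overall plan---exhibit a sequent in $\FOCDS(\sC)\setminus\FOILS(\sC)$ of the shape ``$\forall x\,c(\cdots)\Rightarrow c(\cdots)$''---is the right one, and your CD-validity argument for the sequent $S$ you wrote down is correct. The genuine gap is exactly where you flag it, and it is not a mere bookkeeping issue.

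First, the ``harmless permutation'' is illusory. Permuting coordinates cannot make $a_j\le b_j$ for all $j\ne i_0$ when $|V_1|\ge 2$: every $j\in V_1$ has $a_j=1>b_j=0$, and permutation only relabels indices. What you really need is a \emph{different} witnessing pair $(\ba,\bb)$ with $|V_1|=1$, and for many connectives no such pair exists. Already for the binary exclusive-or $\oplus$ the only vectors with value $1$ are $(1,0)$ and $(0,1)$, so $\ba,\bb$ are forced up to swap. With $i_0=1$ you do get $a_2\le b_2$, but then the intermediate tuple at $(w_1,a)$ is $\ba'=(1,1)$ and $\ttfunc{\oplus}(1,1)=0$, so the antecedent of $S$ fails at $w_0$; with $i_0=2$ the hereditary condition on $R_1$ fails. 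Thus for $\oplus$ your sequent $S$ simply has no two-world counter-model of the proposed form.

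Second, neither proposed repair is easy to complete. A ``flipping argument'' on $V_1$ would have to \emph{produce} a non-supermultiplicative pair of the special shape, and the $\oplus$ example shows none need exist. Replacing the single $P(x)$ by several $P_i(x)$ for $i\in V_1$ does not by itself avoid the intermediate-vector problem: at $(w_1,a)$ all the $P_i(a)$ are $1$ while the propositional $R_j$ have jumped to $b_j$, so the tuple is $\ba\sqcup\bb$ (or some other corner you do not control), and you would again need $\ttfunc{c}$ to be $1$ there.

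The paper avoids the intermediate vector entirely by a different device: it uses \emph{two} unary predicates $p,q$ (rather than one $P$ and propositional $R_j$'s), placing $p(x)$ in the positions where $(\ba[i],\bb[i])=(0,1)$ and $q(x)$ in the positions where $(\ba[i],\bb[i])=(1,0)$, and it sends these to $\forall x\,p(x)$ and $\exists x\,q(x)$ respectively in the succedent. In the fixed two-world counter-model the evaluation tuple at every pair $(w,a)$ is then exactly $\ba$ or $\bb$, never a mixture, so the antecedent holds without any monotonicity hypothesis. The price is that CD-validity is no longer a two-line check: one must control what happens at worlds where $\|\forall x\,p(x)\|$ and $\|\exists x\,q(x)\|$ are both $1$, and how to handle the positions with $\ba[i]=\bb[i]=0$. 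The paper does this by a five-way case split on the values $\ttfunc{c}(\bone)$, $\ttfunc{c}(\ba^{*})$, $\ttfunc{c}(\bb^{*})$, $\ttfunc{c}(\ba^{*}\sqcap\bb^{*})$ (where $\ba^{*},\bb^{*}$ replace the $(0,0)$ coordinates of $\ba,\bb$ by $1$), inserting an auxiliary atom $T$, $F$, or $R$ into those positions as the case demands, and in the last case even defining a biconditional $\leftrightarrow_c$ from $c$ to add extra hypotheses to the sequent. In short, the delicate part you anticipated is precisely the heart of the proof, and it requires substantially more than either of the fixes you sketch.
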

First, let us consider the case in which $\arity(c) \leq 2$. Since, as mentioned in \S~\ref{subsection: Propositional connectives and formulas}, non-supermultiplicative connectives whose arity is less than or equal to $2$ are only $\lor$ and $\exor$ (exclusive disjunction), we only have to consider the cases $c = \lor$ and $c = \exor$. Regarding disjunction, it is known that the sequents of the form $\forall x (p(x) \lor q(x)) \To \forall x p(x) \lor \exists x q(x)$ are CD-valid but not Kripke-valid (cf., e.g., \cite{nagashima1973intermediate}). As to $\exor$, we can verify that the corresponding sequents of the form $\forall x (p(x) \exor q(x)) \To \forall x p(x) \exor \exists x q(x)$ are also $\CD$-valid but not Kripke-valid (cf. case (A) in the proof below).\footnote{In contrast, we can verify that the sequents corresponding to $\D$-axioms, $\forall x (p(x) \exor r) \To \forall p(x) \exor r$, where $r$ is a $0$-ary predicate symbol, are Kripke-valid.} For $c$ of general airty, we construct a sequent in $\FOCDS(\sC) \setminus \FOILS(\sC)$ which plays the same role as $\forall x (p(x) \lor q(x)) \To \forall x p(x) \lor \exists x q(x)$ and $\forall x (p(x) \exor q(x)) \To \forall x p(x) \exor \exists x q(x)$. This construction requires an elaborate case analysis. 
\begin{proof}
Suppose there is a non-supermultiplicative connective $c \in \sC$. Then, there are $\ba, \bb \in \{ 0, 1 \}^{\arity(c)}$ such that $\ttfunc{c}(\ba) = \ttfunc{c}(\bb) = 1$ and $\ttfunc{c}(\ba \sqcap \bb) = 0$.
Let $p$ and $q$ be distinct unary predicate symbols. Fix two propositional symbols $T$ and $R$, which shall play particular roles in the proof.

First, we define a Kripke model $\sK^\star = \langle W^\star, \preceq^\star, D^\star, I^\star \rangle$, which is used to show the sequents constructed below are not Kripke-valid.
\begin{itemize}
\item $W^\star = \{ w_1, w_2 \}$;
\item $w_i \preceq^\star w_j$ if and only if $i \leq j$;
\item $D^\star(w_1) = \{a_1\}$, $D^\star(w_2) = \{ a_1, a_2 \}$;
\item
\begin{itemize}
\item $I(w_1, p) (a_1)= 1$, $I(w_1, q)(a_1) = 0$, $I(w_1, T) = 1$, $I(w_1, R) = 0$;
\item $I(w_2, p) (a_1) = 1$, $I(w_2, q) (a_1) = 0$, $I(w_1, T) = 1$, $I(w_2, R) = 0$, \\
$I(w_2, p) (a_2) = 0$, $I(w_2, q)(a_2) = 1$;
\end{itemize}
\end{itemize}
We define $\ba^*, \bb^* \in \{0,1\}^{\arity(c)}$ as follows:
\begin{align*}
\ba^*[i] & =
\begin{cases}
\ba[i] & \text{if $\ba[i] = 1$ or $\bb[i] = 1$} \\
1 & \text{if $\ba[i] = 0$ and $\bb[i] =0$},
\end{cases} \\
\bb^*[i] & =
\begin{cases}
\bb[i] & \text{if $\ba[i] =1$ or $\bb[i] = 1$} \\
1 & \text{if $\ba[i] = 0$ and $\bb[i] = 0$}.
\end{cases}
\end{align*}
Then, we have $\ba \sqcap \bb^* = \ba^* \sqcap \bb = \ba \sqcap \bb$ and $\ba \sqcup \bb^* = \ba^* \sqcup \bb = \ba^* \sqcup \bb^* = \bone$. We divide into five cases: (A) $\ttfunc{c}(\bone) = 0$; (B) $\ttfunc{c}(\ba^*) = 1$; (C) $\ttfunc{c}(\bb^*) = 1$; (D) $\ttfunc{c}(\ba^*) = \ttfunc{c}(\bb^*) = \ttfunc{c}(\ba^* \sqcap \bb^*) = 0$ and $\ttfunc{c}(\bone) = 1$; (E) $\ttfunc{c}(\ba^*) = \ttfunc{c}(\bb^*) = 0$ and $\ttfunc{c}(\ba^* \sqcap \bb^*) = \ttfunc{c}(\bone) = 1$.

\begin{description}[itemsep = 5pt, listparindent  = 10pt, leftmargin=0pt]
\item [Case \textnormal{(A)}:] $\ttfunc{c}(\bone) =0$. Define $F \in \FOFml(\sC)$ by $F \equiv c(T, \ldots, T)$. Note that, for any Kripke model $\sK = \langle W, \preceq, D, I \rangle$ and any $w \in W$, if $\| T \|_{\sK, w}^{\emptyfunc} = 1$ then $\| F \|_{\sK, v}^{\emptyfunc} = 0$ for all $v \succeq w$.

We define formulas $\varphi_1, \ldots, \varphi_{\arity(c)}, \varphi, \psi_1, \ldots, \psi_{\arity(c)}, \psi \in \FOFml(\sC)$ as follows:
\begin{align*}
\varphi_i & \equiv
\begin{cases}
F & \text{if $\ba[i] = 0$ and $\bb[i] = 0$} \\
p(x) & \text{if $\ba[i] = 0$ and $\bb[i] = 1$} \\
q(x) & \text{if $\ba[i] = 1$ and $\bb[i]$ = 0} \\
T & \text{if $\ba[i] = 1$ and $\bb[i] =1$},
\end{cases} \\
\varphi & \equiv \forall x c (\varphi_1, \ldots, \varphi_{\arity(c)}), \\
\psi_i & \equiv
\begin{cases}
F & \text{if $\ba[i] = 0$ and $\bb[i] = 0$} \\
\forall x p(x) & \text{if $\ba[i] = 0$ and $\bb[i] = 1$} \\
\exists x q(x) & \text{if $\ba[i] = 1$ and $\bb[i]$ = 0} \\
T & \text{if $\ba[i] = 1$ and $\bb[i] =1$},
\end{cases} \\
\psi & \equiv c(\psi_1, \ldots, \psi_{\arity(c)}).
\end{align*}
Put $\vec{\varphi} = \varphi_1, \ldots, \varphi_{\arity(c)}$ and $\vec{\psi} = \psi_1, \ldots, \psi_{\arity(c)}$. We show $T, \varphi \To \psi \in \FOCDS(\sC) \setminus \FOILS(\sC)$.

First, we show $T, \varphi \To \psi \in \FOCDS(\sC)$. Let $\sK = \langle W, \preceq, D, I \rangle$ be a constant domain Kripke model and $w \in W$. We suppose $\| T \|_{\sK, w}^\emptyfunc = \| \varphi \|_{\sK, w}^{\emptyfunc} = 1$, in order to show $\| \psi \|_{\sK, w}^{\emptyfunc} = 1$. To show $\| \psi \|_{\sK, w}^\emptyfunc = 1$, it suffices to show $\ttfunc{c}(\| \vec{\psi} \|_{\sK, v}^\emptyfunc) = 1$ for any $v \succeq w$. Let $v \succeq w$. Then, we can see $\| \forall x p(x) \|_{\sK, v}^\emptyfunc = 1$ or $\| \exists x q(x) \|_{\sK, v}^\emptyfunc = 1$ holds. For otherwise there exists some $a \in D$ such that $\| p(x) \|_{\sK, v}^{\emptyfunc[x \mapsto a]} = \| q(x) \|_{\sK, v}^{\emptyfunc[x \mapsto a]} = 0$, and hence, $\| \vec{\varphi} \|_{\sK, v}^{\emptyfunc[x \mapsto a]} = \ba \sqcap \bb$, and thus, $\ttfunc{c}(\| \vec{\varphi} \|_{\sK, v}^{\emptyfunc[x \mapsto a]}) = 0$, which contradicts $\| \varphi \|_{\sK, w}^\emptyfunc = 1$.
First, we consider the case $\| \forall x p(x) \|_{\sK, v}^\emptyfunc = \| \exists x q(x) \|_{\sK, v}^\emptyfunc = 1$. Then we have $\| \vec{\psi} \|_{\sK, v}^\emptyfunc = \ba \sqcup \bb$ and there exists some $a \in D$ such that $\| p(x) \|_{\sK, v}^{\emptyfunc [x \mapsto a]} = \| q(x) \|_{\sK, v}^{\emptyfunc[x \mapsto a]} = 1$, so that $\| \vec{\varphi} \|_{\sK, v}^{\emptyfunc [x \mapsto a]} = \ba \sqcup \bb$. Hence, we have $\ttfunc{c}(\| \vec{\psi} \|_{\sK, v}^\emptyfunc) = \ttfunc{c}(\ba \sqcup \bb) = \ttfunc{c}(\| \vec{\varphi} \|_{\sK, v}^{\emptyfunc [x \mapsto a]})$ the right hand side of which equals to $1$ by $\| \varphi \|_{\sK, w}^\emptyfunc = 1$. Thus, $\ttfunc{c}(\| \vec{\psi} \|_{\sK, v}^\emptyfunc) = 1$. Next, we consider the case that one of $\| \forall x p(x) \|_{\sK, v}^\emptyfunc$ and $\| \exists x q(x) \|_{\sK, v}^\emptyfunc$ is $1$ and the other is $0$. Then, either $\| \vec{\psi} \|_{\sK, v}^\emptyfunc = \ba$ or $\| \vec{\psi} \|_{\sK, v}^\emptyfunc = \bb$, and hence, we have $\ttfunc{c}(\| \vec{\psi} \|_{\sK, v}^\emptyfunc) = 1$.

Secondly, we show $T, \varphi \To \psi \notin \FOILS(\sC)$. In order to do so, we verify $\| T, \varphi \To \psi \|_{\sK^\star, w_1}^{\emptyfunc} = 0$. First, we can easily see the followings:
$\| \vec{\varphi} \|_{\sK^\star, w_1}^{\emptyfunc[x \mapsto a_1]} = \| \vec{\varphi} \|_{\sK^\star, w_2}^{\emptyfunc[x \mapsto a_1]} = \bb$; $\| \vec{\varphi} \|_{\sK^\star, w_2}^{\emptyfunc[x \mapsto a_2]} = \ba$; $\| \forall x p(x) \|_{\sK^\star, w_1}^\emptyfunc = \| \exists x q(x) \|_{\sK^\star, w_1}^{\emptyfunc} = 0$; and $\| \vec{\psi} \|_{\sK^\star, w_1}^{\emptyfunc} = \ba \sqcap \bb$. From these it follows that $\| \varphi \|_{\sK^\star, w_1}^\emptyfunc = 1$ and $\| \psi \|_{\sK^\star, w_1}^\emptyfunc = 0$.
\item [Case \textnormal{(B)}:] $\ttfunc{c}(\ba^*) = 1$. Note that, in this case, there is no $i$ such that $\ba^*[i] = \bb[i] = 0$. We define formulas $\varphi_1, \ldots, \varphi_{\arity(c)}, \varphi, \psi_1, \ldots, \psi_{\arity(c)}, \psi \in \FOFml(\sC)$ as follows:
\begin{align*}
\varphi_i & \equiv
\begin{cases}
p(x) & \text{if $\ba^*[i] = 0$ and $\bb[i] = 1$} \\
q(x) & \text{if $\ba^*[i] = 1$ and $\bb[i] = 0$} \\
T & \text{if $\ba^*[i] = 1$ and $\bb^*[i] = 1$}
\end{cases} \\
\varphi & \equiv \forall x c (\varphi_1, \ldots, \varphi_{\arity(c)}) \\
\psi_i & \equiv
\begin{cases}
\forall x p(x) & \text{if $\ba^*[i] = 0$ and $\bb[i] = 1$} \\
\exists x q(x) & \text{if $\ba^*[i] = 1$ and $\bb[i] = 0$} \\
T & \text{if $\ba^*[i] = 1$ and $\bb^*[i] = 1$}
\end{cases} \\
\psi & \equiv c(\psi_1, \ldots, \psi_{\arity(c)})
\end{align*}
Put $\vec{\varphi} = \varphi_1, \ldots, \varphi_{\arity(c)}$ and $\vec{\psi} = \psi_1, \ldots, \psi_{\arity(c)}$. We show $T, \varphi \To \psi \in \FOCDS(\sC) \setminus \FOILS(\sC)$.

First, $T, \varphi \To \psi \in \FOCDS(\sC)$ can be proved similarly to case (A); in fact, if we replace every $\ba$ in the proof in case (A) by $\ba^*$ (and thus, $\ba \sqcap \bb$ by $\ba^* \sqcap \bb$ and $\ba \sqcup \bb$ by $\ba^* \sqcup \bb$), then we obtain a proof for case (B).

$T, \varphi \To \psi \notin \FOILS(\sC)$ can also be proved similarly to case (A); in fact, if we replace every $\ba$ in the proof in case (A) by $\ba^*$ (and thus, $\ba \sqcap \bb$ by $\ba^* \sqcap \bb$), then we obtain a proof for case (B).
\item [Case \textnormal{(C)}:] $\ttfunc{c}(\bb^*) = 1$. This case can be shown similarly to case (B).

\item [Case \textnormal{(D)}:] $\ttfunc{c}(\ba^*) = \ttfunc{c}(\bb^*) = \ttfunc{c}(\ba^* \sqcap \bb^*) = 0$ and $\ttfunc{c}(\bone) = 1$. Note that, in this case, since $\ttfunc{c}(\ba^*) \neq \ttfunc{c}(\ba)$, we have $\ba^* \neq \ba$, and hence, there is at least one $i$ such that $\ba[i] = \bb[i] = 0$. We define formulas $\varphi_1, \ldots, \varphi_{\arity(c)}, \varphi, \psi_1, \ldots, \psi_{\arity(c)}, \psi \in \FOFml(\sC)$ as follows:
\begin{align*}
\varphi_i & \equiv
\begin{cases}
R & \text{if $\ba[i] = 0$ and $\bb[i] = 0$} \\
p(x) & \text{if $\ba[i] = 0$ and $\bb[i] = 1$} \\
q(x) & \text{if $\ba[i] = 1$ and $\bb[i]$ = 0} \\
T & \text{if $\ba[i] = 1$ and $\bb[i] =1$}
\end{cases} \\
\varphi & \equiv \forall x c(\varphi_1, \ldots, \varphi_{\arity(c)}) \\
\psi_i & \equiv
\begin{cases}
R & \text{if $\ba[i] = 0$ and $\bb[i] = 0$} \\
\forall x p(x) & \text{if $\ba[i] = 0$ and $\bb[i] = 1$} \\
\exists x q(x) & \text{if $\ba[i] = 1$ and $\bb[i]$ = 0} \\
T & \text{if $\ba[i] = 1$ and $\bb[i] =1$}
\end{cases} \\
\psi & \equiv c(\psi_1, \ldots, \psi_{\arity(c)})
\end{align*}
Put $\vec{\varphi} = \varphi_1, \ldots, \varphi_{\arity(c)}$ and $\vec{\psi} = \psi_1, \ldots, \psi_{\arity(c)}$. We show $T, \varphi \To \psi \in \FOCDS(\sC) \setminus \FOILS(\sC)$.

First, we show $T, \varphi \To \psi \in \FOCDS(\sC)$. Let $\sK = \langle W, \preceq, D, I \rangle$ be a constant domain Kripke model and $w \in W$. We suppose $\| T \|_{\sK, w}^\emptyfunc = \| \varphi \|_{\sK, w}^\emptyfunc = 1$, in order to show $\| \psi \|_{\sK, w}^\emptyfunc = 1$. To show $\| \psi \|_{\sK, w}^\emptyfunc = 1$, it suffices to show $\ttfunc{c}(\| \vec{\psi} \|_{\sK, v}^\emptyfunc) = 1$ for any $v \succeq w$. Let $v \succeq w$. Then, we divide into two subcases according to the value of $\| R \|_{\sK, v}^\emptyfunc$.
  \begin{description}[itemsep = 3pt, listparindent=10pt, itemindent=6pt]
  \item [Subcase $(\mathrm{i})$:] $\| R \|_{\sK, v}^\emptyfunc = 0$. In this case, $\| \vec{\psi} \|_{\sK, v}^\emptyfunc = 1$ can be shown similarly to case (A) because $R$ plays the same role as $F$ in case (A).
  \item [Subcase $(\mathrm{ii})$:] $\| R \|_{\sK, v}^\emptyfunc = 1$. In this case, $\| \forall x p(x) \|_{\sK, v}^\emptyfunc = \| \exists x q(x) \|_{\sK, v}^\emptyfunc = 1$ holds. For, otherwise, there is some $a \in D$ such that $\| \vec{\varphi} \|_{\sK, v}^{\emptyfunc [x \mapsto a]}$ equals to either $\ba^*$, $\bb^*$ or $\ba^* \sqcap \bb^*$, and hence, $\ttfunc{c}(\| \vec{\varphi} \|_{\sK, v}^{\emptyfunc [x \mapsto a]}) = 0$, which contradicts $\| \varphi \|_{\sK, w}^\emptyfunc = 1$. Thus, we have $\| \vec{\psi} \|_{\sK, v}^\emptyfunc = \bone$. Hence, $\ttfunc{c}(\| \vec{\psi} \|_{\sK, v}^\emptyfunc) = 1$.
  \end{description}
Secondly, we show $T, \varphi \To \psi \notin \FOILS(\sC)$.
Since $\| R \|_{\sK^\star, w_1}^\emptyfunc = \| R \|_{\sK^\star, w_2}^\emptyfunc = 0$, $R$ plays the same role as $F$ in case (A). Thus, $T, \varphi \To \psi \in \FOILS(\sC)$ can be shown similarly to case (A).
\item [Case \textnormal{(E)}:] $\ttfunc{c}(\ba^*) = \ttfunc{c}(\bb^*) = 0$ and $\ttfunc{c}(\ba^* \sqcap \bb^*) = \ttfunc{c}(\bone) = 1$. Note that, in this case, there is no $i$ such that $\ba[i] = \bb[i] = 0$. First, in order to construct the desired sequent, we define biconditional $\leftrightarrow$ ($\ttfunc{\leftrightarrow}(x,y) = 1$ if and only if $x = y$) using $c$. That is, for any $\alpha \in \FOFml(\sC)$ and any $\beta \in \FOFml(\sC)$, we define a formula $\alpha \leftrightarrow_c \beta \in \FOFml(\sC)$. First, we define $\theta_1^{\alpha, \beta}, \ldots, \theta_{\arity(c)}^{\alpha, \beta}$ for $\alpha, \beta \in \FOFml(\sC)$ by
\[
\theta_i^{\alpha, \beta} \equiv
\begin{cases}
\alpha & \text{if $\ba^*[i] = 0$, $\bb^*[i] = 1$} \\
\beta & \text{if $\ba^*[i] = 1$, $\bb^*[i] = 0$} \\
T & \text{if $\ba^*[i] = 1$, $\bb^*[i] = 1$}.
\end{cases}
\]
Put $\overrightarrow{\theta^{\alpha, \beta}} \equiv \theta_1^{\alpha, \beta}, \ldots, \theta_{\arity(c)}^{\alpha, \beta}$.
We define $\alpha \leftrightarrow_c \beta$ by $\alpha \leftrightarrow_c \beta \equiv c(\theta_1^{\alpha, \beta}, \ldots, \theta_{\arity(c)}^{\alpha, \beta})$. Then, $\leftrightarrow_c$ has the same meaning as biconditional $\leftrightarrow$ whenever the value of $T$ is interpreted as $1$, that is, for any Kripke model $\sK = \langle W, \preceq, D, I \rangle$, any $w \in W$ and any assignment $\rho$ in $D(w)$, if $\| T \|_{\sK, w}^\rho = 1$, then $\| \alpha \leftrightarrow_c \beta \|_{\sK, w}^\rho = 1$ holds if and only if $\| \alpha \|_{\sK, v}^\rho = \| \beta \|_{\sK, v}^\rho$ holds for all $v \succeq w$. In order to show the ``if'' part, let $\sK = \langle W, \preceq, D, I \rangle$ be a Kripke model and $w \in W$, and suppose $\| T \|_{\sK, w}^\rho = 1$ and $\| \alpha \|_{\sK, v}^\rho = \| \beta \|_{\sK, v}^\rho$ for all $v \succeq w$. Then, for any $v \succeq w$, $\| \overrightarrow{\theta^{\alpha, \beta}} \|_{\sK, v}^\rho$ is either $\ba^* \sqcap \bb^*$ or $\bone$, and hence, $\ttfunc{c}(\| \overrightarrow{\theta^{\alpha, \beta}}\|_{\sK, v}^\rho) = 1$ for any $v \succeq w$. Thus, we have $\| \alpha \leftrightarrow_c \beta \|_{\sK, w}^\rho = 1$. In order to show the (contraposition of) ``only if'' part, let $\sK = \langle W, \preceq, D, I \rangle$ be a Kripke model and $w \in W$, and suppose $\| T \|_{\sK, w}^\rho = 1$ and $\| \alpha \|_{\sK, v}^\rho \neq \| \beta \|_{\sK, v}^\rho$ for some $v \succeq w$. Then, $\| \overrightarrow{\theta^{\alpha, \beta}} \|_{\sK, v}^\rho$ is either $\ba^*$ or $\bb^*$, and hence, we have $\ttfunc{c}(\| \overrightarrow{\theta^{\alpha, \beta}} \|_{\sK, v}^\rho) = 0$. Thus, we have $\| \alpha \leftrightarrow_c \beta \|_{\sK, w}^\rho = 0$.

Now, we define $\varphi_1, \ldots, \varphi_{\arity(c)}, \varphi, \psi_1, \ldots, \psi_{\arity(c)}. \psi \in \FOFml(\sC)$ as in case (D). Put $\vec{\varphi} = \varphi_1, \ldots, \varphi_{\arity(c)}$ and $\vec{\psi} = \psi_1, \ldots, \psi_{\arity(c)}$. We show $T, R \leftrightarrow_c \forall x p(x), R \leftrightarrow_c \exists x q(x), \varphi \To \psi \in \FOCDS(\sC) \setminus \FOILS(\sC)$.

First, we show $T, R \leftrightarrow_c \forall x p(x), R \leftrightarrow_c \exists x q(x), \varphi \To \psi \in \FOCDS(\sC)$. Let $\sK = \langle W, \preceq, D, I \rangle$ be a constant domain Kripke and $w \in W$. We suppose $\| T \|_{\sK, w}^\emptyfunc = \| P \leftrightarrow_c \forall x p(x) \|_{\sK, w}^\emptyfunc = \| P \leftrightarrow_c \forall x q(x) \|_{\sK, w}^\emptyfunc = \| \varphi \|_{\sK, w}^\emptyfunc = 1$, in order to show $\| \psi \|_{\sK, w}^\emptyfunc = 1$. To show $\| \psi \|_{\sK, w}^\emptyfunc = 1$, it suffices to show $\ttfunc{c}(\| \vec{\psi} \|_{\sK, v}^\emptyfunc) = 1$ for any $v \succeq w$. Let $v \succeq w$. Then, we divide into two cases according to the value of $\| R \|_{\sK, v}^\emptyfunc$.
  \begin{description}[itemsep = 3pt, listparindent=10pt, itemindent=6pt]
  \item [Subcase $(\mathrm{i})$:] $\| R \|_{\sK, v}^\emptyfunc = 0$. In this case, $\| \vec{\psi} \|_{\sK, v}^\emptyfunc = 1$ can be shown similarly to subcase $(\mathrm{i})$ in case (D).
  \item [Subcase $(\mathrm{ii})$:] $\| R \|_{\sK, v}^\emptyfunc = 1$. Since $\leftrightarrow_c$ has the same meaning as $\leftrightarrow$ whenever the value of $T$ is interpreted as $1$, $\| \forall x p(x) \|_{\sK, v}^\emptyfunc = \| \forall x q(x) \|_{\sK, v}^\emptyfunc = 1$ follows from $\| T \|_{\sK, w}^\emptyfunc = 1$, $\| R \leftrightarrow_c \forall x p(x) \|_{\sK, w}^\emptyfunc = \| R \leftrightarrow_c \forall x q(x) \|_{\sK, w}^\emptyfunc = 1$. Hence, we have $\| \forall x p(x) \|_{\sK, v}^\emptyfunc = \| \exists x q(x) \|_{\sK, v}^\emptyfunc = 1$, and thus, $\| \vec{\psi} \|_{\sK, v}^\emptyfunc = \bone$. Hence, we have $\ttfunc{c}(\| \vec{\psi} \|_{\sK, v}^\emptyfunc) = 1$.
  \end{description}

Secondly, $T, R \leftrightarrow_c \forall x p(x), R \leftrightarrow_c \exists x q(x), \varphi \To \psi \notin \FOILS(\sC)$ can be shown similarly to case (D).
\end{description}
\qed
\end{proof}

\section{Conclusion}\label{Section: Conclusion}
 We have extended generalized Kripke semantics to first-order logic. By considering general propositional connectives, we have clarified what property of connectives causes the difference between intuitionistic first-order logic and the logic of constant domains in terms of validity of sequents. Furthermore, as mentioned in \S~\ref{subsection: Predicate logic with truth functional connectives}, we have also found out what property of connectives causes the difference between the logic of constant domains and classical logic and the difference between intuitionistic logic and classical logic. The results are summarized as follows:
   \begin{theorem*} Let $\sC$ be a set of propositional connectives. Then, the followings hold.
     \begin{itemize}
     \item $\FOILS(\sC) = \FOCDS(\sC)$ if and only if all connectives in $\sC$ are supermultiplicative.
     \item $\FOCDS(\sC) = \FOCLS(\sC)$ if and only if all connectives in $\sC$ are monotonic.
     \item $\FOILS(\sC) = \FOCLS(\sC)$ if and only if all connectives in $\sC$ are both monotonic and supermultiplicative.
     \end{itemize}
   \end{theorem*}

\bibliographystyle{splncs04}
\bibliography{main}
\end{document}